\documentclass[11pt, a4paper]{article}

\usepackage[english]{babel}
\usepackage{hyphenat}

\usepackage{amsthm}
\usepackage{enumerate}
\usepackage{euscript} 
\usepackage{eufrak} 
\usepackage{mathrsfs}
\usepackage{amssymb}
\usepackage{stackrel}
\usepackage{mathtools}
\usepackage{ bbold }
\usepackage{mathpazo}
\usepackage[all]{xy}
\usepackage{hyperref}

\setcounter{tocdepth}{3}
\usepackage{geometry}
\usepackage{float}
\usepackage{cite}

\theoremstyle{plain}

\newtheorem{theorem}{Theorem}[section]
\newtheorem{proposition}[theorem]{Proposition}%[section]
\newtheorem{cor}[theorem]{Corollary}%[section]
\newtheorem{lemma}[theorem]{Lemma}%[section]

\theoremstyle{definition}
\newtheorem{obs}[theorem]{Remark}%[section]

\newtheorem{defi}[theorem]{Definition}%[section]
\newtheorem{ej}[theorem]{Example}

\newcommand{\ent}{\mathbb Z}
\newcommand{\re}{\mathbb R}

\newcommand{\oct}{\mathbb{O}}
\newcommand{\C}{\mathcal{C}}
\newcommand{\U}{\mathcal{U}}
\newcommand{\M}{\mathscr{M}}

\newcommand{\x}{\otimes}
\newcommand{\X}[1]{\otimes_{#1}}

\renewcommand{\1}{\mathbb{1}}
\newcommand{\epi}{\twoheadrightarrow}
\newcommand{\mono}{\hookrightarrow}
\newcommand{\tens}[1]{\mathbin{\mathop{\otimes}\limits_{#1}}}
\newcommand{\Proj}[1]{\mathbb P^{n}_{#1}}
\renewcommand{\mod}{\mathcal{M}od}%para categoria de modulos
\newcommand{\Hom}[3]{\mathrm{Hom}_{#3} (#1, #2)}

%%%%%%algunos diagramas
\newcommand{\cenesp}{\vcenter{\xymatrix{\quad \quad \quad}}}

\author{Matias Data\footnote{The author was fully supported by CONICET, Argentina and the Universidad de Buenos Aires.},~
Juliana Osorio \footnote{The author was fully supported by CONICET, Argentina and the Universidad de Buenos Aires.}}
\title{On the Relative Projective Space}
\date{}
\begin{document}
\maketitle
\begin{abstract}
Let $(\C,\x,\1)$ be an abelian symmetric monoidal category satisfying certain exactness conditions. In this paper we define a presheaf  $\Proj{\C}$ on the category of commutative algebras in $\C$ and we prove that this functor is a $\C$-scheme in the sense of Toen and Vaqui\'e. This construction gives us  a context of non-associative relative algebraic geometry. The most important example of the construction is the octonionic projective space.
\end{abstract}

\tableofcontents

\section*{Introduction}
The study of the octonionic projective plane was initiated by R. Moufang in 1933 \cite{moufang}. She constructed it by coordinatizing using the octonion algebra, also known as the Cayley-Dickson algebra. Her point was to show an example of a non-Desarguessian plane.\\
Another way to approach the octonionic plane is via Jordan algebras. The idea is to consider the exceptional simple Jordan algebra $H(\oct_{3})$ of $3\times 3$ matrices with entries in the octonions, which are symmetric with respect to the involution. This attempt was first made by P. Jordan in 1949 \cite{jordan}. He considered the real octonion algebra and used the idempotents of $H(\oct_{3})$ to represent the points and lines in the octonionic projective plane. Later in 1953, H. Freudenthal rediscovered the same construction \cite{freudenthal} and used it to study the exceptional Lie groups $F_{4}$ and $E_{6}$. In this direction other attempts were made, but allowing the octonion algebra over a  field of characteristic not 2 or 3. In this setting, the elements of rank one were used to represent points and lines.\\

The main purpose of this work is to give a new construction for the octonionic projective space, which we shall denote $\Proj{\oct}$. The approach is via relative algebraic geometry in monoidal categories. The relative algebraic geometry over a symmetric monoidal category has been widely studied in the literature, see for instance \cite{DeligneMilne}, \cite{Saav}, \cite{hakim} and \cite{TV}. When the monoidal category is the category of modules over a commutative ring $k$, the relative algebraic geometry reduces to the usual algebraic geometry over the scheme $Spec(k)$. \\
More explicitely, let $(\C,\x,\1)$ be a closed monoidal category with limits and colimits and $Comm(\C)$ the category of commutative algebras in $\C$, then the category of affine $\C$-schemes $Aff_{\C}$ is defined as $Comm(\C)^{op}$. Next,  a $\C$-scheme will be a sheaf in $Aff_{\C}$ which is covered by finitely many affine schemes 

Throughout this paper $(\C,\x,\1)$ is an abelian bicomplete symmetric closed monoidal category such that $\1$ is a projective finitely presentable generator. This condition on $\1$ means that  the forgetful functor 
$V_{0}=Hom_{\C}(\1,-):\C\to \mathcal Ens$
 is conservative, preserves and reflects epimorphisms and filtered colimits. Although not all of these properties are needed in  some of the results, these are exactly the conditions required for the functor $\Proj{\C}$ to be a $\C$-scheme. A category fulfilling those conditions will be called an {\it abelian strong relative context}. Because of the adjunction $\C\to\mod_{\C}(A)$, if $\C$ is an abelian strong relative context then $\mod_{\C}(A)$ is also an abelian strong relative context.\\
An outline of this work is the following: the first section contains a short review of the ideas of relative algebraic geometry developed in \cite{TV}.  Section 2 deals with Zariski coverings of an affine scheme $Spec(A)$ in terms of a generating family of elements in $A$.  We show that associated to an ideal of $A$ there is an open sub-scheme of $Spec(A)$ and we also give a sufficient condition for a family of Zariski open immersions to be a Zariski covering. This last result will allow as to show that the functor $\Proj{\C}$ has a finite Zariski covering by affine schemes. Finally we show that for a faithfully flat morphism $A\to B$ in $Comm(\C)$ and $M\in \mod_{\C}(A)$, $L\mono M$ is a direct summand whenever $B\x L$ is a direct summand of $B\x M$. In section 3 we define the functor $\Proj{\C}$ and we prove that if $\C$ is an abelian strong relative context then $\Proj{\C}$ is a $\C$-scheme. Following the ideas of H. Albuquerque and S. Majid \cite{quasialgebraO} we are able to define the category of $\oct$-modules, $\mod(\oct)$ and we prove that this category is an abelian strong relative context, as a consequence we have defined the relative scheme $\Proj{\oct}$. 
\section{Relative Algebraic Geometry}

%\subsection*{Zariski Topology in $Aff_{\C}$}
Let $T$ be any category  with finite limits and consider the pseudo functor $M:T^{op}\to Cat$ such that
\begin{enumerate}
\item For every $X$ in $T$, the category $M(X)$ posses all limits and colimits.
\item For every $f:X'\to X $ in $T$, the functor $M(f)=f^{*}:M(X)\to M(X')$ has a conservative right adjoint $f_{*}:M(X')\to M(X)$.
\item For every cartesian diagram in $T$
$$
\xymatrix{Y'\ar^{f'}[r]\ar_{g'}[d]& Y\ar^{g}[d]\\
X'\ar_{f}[r]& X}
$$
the natural transformation $f^{*}g_{*}\Rightarrow g'_{*}f'^{*}$ is an isomorphism.
\end{enumerate}

\begin{defi} Let $(f_{i}:X_{i}\to X)_{i\in I}$ be a family a morphism in $T$.
\begin{enumerate}
\item The family is an $M$-cover if there exists a finite subset $J\subset I$ such that  the family of the induced functors $(f^{*}_{i}:M(X)\to M(X_{i}))_{i\in J}$ is jointly conservative.
\item The family is said to be $M$-flat if the functors $f_{i}^{*}$ are left exact for every $i\in I$.
\item The family is $M$-faithfully flat if it is an $M$-cover and $M$-flat. 
\end{enumerate}
\end{defi}

\begin{proposition}
The $M$-faithfully flat families define a pretoplogy over $T$. 
\end{proposition}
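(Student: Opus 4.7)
The plan is to verify the three Grothendieck pretopology axioms for $M$-faithfully flat families: identity, stability under pullback, and stability under composition. For the identity axiom, if $f : X' \to X$ is an isomorphism then $f^*$ is an equivalence of categories, hence conservative as a singleton family and left exact. For stability under composition, given an $M$-faithfully flat family $(f_i : X_i \to X)_{i \in I}$ with finite conservativity subset $J \subset I$, and $M$-faithfully flat families $(g_{ij} : X_{ij} \to X_i)_{j \in I_i}$ with finite subsets $J_i \subset I_i$ for each $i$, the identity $(f_i \circ g_{ij})^* \cong g_{ij}^* \circ f_i^*$ exhibits left exactness as a composition of left exact functors, and joint conservativity over the finite index set $\{(i,j) : i \in J,\ j \in J_i\}$ follows by chasing an iso hypothesis first through the $(g_{ij}^*)_{j \in J_i}$ and then through the $(f_i^*)_{i \in J}$.

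The substantive axiom is pullback stability. Given an $M$-faithfully flat family $(f_i : X_i \to X)_{i \in I}$ with finite conservativity subset $J$, and $g : Y \to X$ arbitrary, I form the pullback squares (which exist since $T$ has finite limits)
$$
\xymatrix{Y_i\ar^{f'_i}[r]\ar_{g_i}[d]& Y\ar^{g}[d]\\
X_i\ar_{f_i}[r]& X}
$$
and invoke the base change isomorphism $(g_i)_* \circ (f'_i)^* \cong f_i^* \circ g_*$ from axiom (3) of the pseudo-functor. The conservativity assertion then reduces to a diagram chase: if $(f'_i)^*(u)$ is invertible for every $i \in J$, applying the right adjoint $(g_i)_*$ makes $f_i^* g_*(u)$ invertible for each such $i$; joint conservativity of $(f_i^*)_{i \in J}$ forces $g_*(u)$ invertible, and conservativity of $g_*$ from axiom (2) finally forces $u$ invertible. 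The same finite subset $J$ witnesses the cover property for the pulled-back family.

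The main obstacle is left exactness of $(f'_i)^*$, which does not follow directly from adjunction formalities since $(f'_i)^*$ is itself a left adjoint (hence a priori only right exact). The plan is to extract it from the base change isomorphism: the composite $(g_i)_* \circ (f'_i)^* \cong f_i^* \circ g_*$ preserves finite limits, because $g_*$ is a right adjoint and $f_i^*$ is left exact by $M$-flatness of the original family. Since $(g_i)_*$ is itself a conservative right adjoint (again axiom (2)), it preserves and reflects finite limits; so for any finite diagram $D$ in $M(Y)$ with limit $L$, the canonical comparison map $(f'_i)^*(L) \to \lim (f'_i)^* D$ is sent by $(g_i)_*$ to an isomorphism (both sides computing $\lim f_i^* g_* D$ after base change and limit preservation), and is therefore an isomorphism already in $M(Y_i)$ by conservativity. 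This verifies the third axiom and completes the plan.
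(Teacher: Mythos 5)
Your proposal is correct: you verify all three pretopology axioms, and you rightly isolate the only non-formal point, namely left exactness of the pulled-back functors $(f'_i)^*$, resolving it by transporting the limit-comparison map through the base-change isomorphism $(g_i)_*(f'_i)^*\cong f_i^* g_*$ and using that the conservative right adjoint $(g_i)_*$ preserves limits and reflects isomorphisms; the conservativity chase for the pulled-back cover (apply $(g_i)_*$, use base change, then joint conservativity of $(f_i^*)_{i\in J}$ and conservativity of $g_*$) is likewise sound. The paper itself offers no proof of this proposition -- Section 1 is a review of To\"en--Vaqui\'e \cite{TV}, where this statement and its proof originate -- and your argument is essentially the standard one from that source, so your reconstruction is faithful to the intended proof.
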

\begin{theorem}\label{M stack}
The seudo-functor $M$ is a stack.
\end{theorem}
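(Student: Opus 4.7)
The plan is to interpret the stack property as a descent condition and to apply Beck's comonadicity theorem to a single adjunction assembled from the cover.

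Let $(f_{i}:X_{i}\to X)_{i\in I}$ be an $M$-faithfully flat family. By definition there is a finite $J\subset I$ such that $(f_{i}^{*})_{i\in J}$ is jointly conservative. I would first reduce to the finite subfamily: once $M(X)$ is shown to be the descent category for $J$, the descent category for the full family $I$ is equivalent to it as well, since both objects and morphisms over $X$ are determined by their images under $(f_{i}^{*})_{i\in J}$, and the extra compatibilities indexed by $I\setminus J$ are uniquely reconstructed by pulling back from $M(X)$.

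Next I would package $J$ into a single adjunction. Set $\mathcal{N}=\prod_{i\in J}M(X_{i})$, bicomplete with (co)limits formed componentwise, and define $F:M(X)\to\mathcal{N}$ by $F(P)=(f_{i}^{*}P)_{i\in J}$ with right adjoint $G((Q_{i})_{i\in J})=\prod_{i\in J}f_{i,*}Q_{i}$. Hypothesis (2) makes each $f_{i}^{*}$ left exact, hence $F$ is left exact; joint conservativity lifts to conservativity of $F$; and each $f_{i,*}$ is conservative. Beck's comonadicity theorem then applies to $F\dashv G$ and identifies $M(X)$ with the category $\mathrm{coAlg}_{FG}(\mathcal{N})$ of coalgebras for the comonad $FG$.

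The final step is to match this coalgebra category with the classical descent data for the cover. Writing $X_{ij}=X_{i}\times_{X}X_{j}$ with projections $q_{ij}:X_{ij}\to X_{i}$ and $p_{ij}:X_{ij}\to X_{j}$, the Beck--Chevalley isomorphism from hypothesis (3) yields $f_{j}^{*}f_{i,*}\cong p_{ij,*}\,q_{ij}^{*}$, so that $FG((N_{i}))_{j}\cong\prod_{i\in J}p_{ij,*}q_{ij}^{*}N_{i}$. Through adjunction, a coalgebra structure on $(N_{i})$ unfolds into a family of gluing isomorphisms $q_{ij}^{*}N_{i}\cong p_{ij}^{*}N_{j}$ on the $X_{ij}$, and coassociativity of the coaction becomes the cocycle condition on the triple fibre products $X_{ijk}$. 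The main obstacle I anticipate is precisely this identification step: carefully tracking the compositions of Beck--Chevalley isomorphisms over $X_{ij}$ and $X_{ijk}$ so that coassociativity matches the cocycle equation on the nose. Once that bookkeeping is settled, the remainder of the argument is formal.
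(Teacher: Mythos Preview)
The paper does not actually prove this theorem: it is stated without a proof environment and followed only by a remark describing the descent category $Desc(\mathcal{U}/X,M)$ and the limit formula for the right adjoint $p_{*}$. The result is treated as background from \cite{TV}.

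Your Beck comonadicity argument is the standard way to establish this kind of descent statement and is essentially correct. Two small points deserve care. First, the left exactness of each $f_{i}^{*}$ does not come from hypothesis~(2)---that hypothesis only supplies a conservative right adjoint $f_{i,*}$---but from the $M$-flatness clause in the definition of an $M$-faithfully flat family. Second, the reduction from the full index set $I$ to the finite subset $J$ is a little less immediate than you indicate: once you have glued the $J$-indexed data to an object $x\in M(X)$, you must still verify $f_{i}^{*}x\cong x_{i}$ for $i\in I\setminus J$, and this uses that the base-changed family $(X_{ij}\to X_{i})_{j\in J}$ is again an $M$-faithfully flat cover of $X_{i}$ (stability under pullback is part of the pretopology axioms), together with descent over that cover to glue the isomorphisms $x_{i}|_{X_{ij}}\cong x_{j}|_{X_{ij}}\cong (f_{i}^{*}x)|_{X_{ij}}$. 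With those two adjustments your sketch goes through.
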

\begin{obs}
Given $\mathcal U=(U_{i}\to X)_{i\in I}$ an $M$-covering in the site $T$, one has the category $Desc(\mathcal U/X,M)$ of descent data, this is a category whose objects are pairs $(x_{i},\theta_{i,j})_{i,j}$ with $x_{i}$ an object in $M(U_{i})$
and $\theta_{i,j}:(x_{i})|_{U_{i,j}}\cong (x_{j})|_{U_{i,j}}$ are isomorphisms in $M(U_{ij})$ satisfying the cocycle condition $\theta_{j,k}\circ\theta_{ij}=\theta_{ik}$ in $M(U_{ijk})$, where $U_{ij}$ denotes the pullback $U_{i}\times_{X}U_{j}$. A morphism between two descent data $(x_{i},\theta_{ij})_{ij}, (y_{i},\phi_{ij})_{ij}$ is a family of morphisms $f_{i}:x_{i}\to y_{i}$ in $M(U_{i})$ compatible with the given isomorphisms, i.e., $\phi_{i,j}f_{i}=f_{j}\theta_{ij}$ in $M(U_{ij})$. The fact the $M$ is a stack can be paraphrased as follows: for each covering $\mathcal V$ there is canonical functor $p^{*}:M(X)\to Desc(\mathcal V/X,M)$. $p^{*}$ is in fact an adjoint equivalence with right adjoint given by
\begin{equation}\label{M stack}
p_{*}(x_{i},\theta_{i,j})=Lim\left(\xymatrix{\displaystyle\prod_{i}(p_{*})(x_{i})\ar@<0.5ex>[r]\ar@<-0.5ex>[r]& \displaystyle\prod_{i,j}(p_{i,j})_{*}(x_{i})|_{U_{i,j}}}\right)
\end{equation}
with $p_{i,j}:U_{i,j}\to X$.
\end{obs}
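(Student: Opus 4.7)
The plan is to show that for every $M$-cover $\mathcal{U}=(f_{i}:U_{i}\to X)_{i\in I}$, the canonical comparison functor $p^{*}:M(X)\to Desc(\mathcal{U}/X,M)$ described in the remark is an equivalence of categories, with quasi-inverse given by the displayed equalizer formula. This is classical faithfully flat descent, carried out via Beck's (co)monadicity theorem, whose hypotheses are essentially packaged into axioms (2) and (3) of the pseudo-functor $M$.

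Since an $M$-cover admits a finite subcover by definition, I may assume $I$ is finite. Rather than demand coproducts in $T$, I would combine the cover into a single adjunction by passing to the product category $\prod_{i\in I}M(U_{i})$: the combined pullback $F=\prod_{i}f_{i}^{*}:M(X)\to \prod_{i}M(U_{i})$ is left exact by $M$-flatness, conservative by the definition of $M$-cover, and has right adjoint $G=\prod_{i}(f_{i})_{*}$, which is itself conservative because each $(f_{i})_{*}$ is. Beck's comonadicity theorem then yields an equivalence $M(X)\simeq \mathrm{coAlg}(FG)$ between $M(X)$ and the category of coalgebras for the comonad $FG$ on $\prod_{i}M(U_{i})$; the two hypotheses needed, namely conservativity of $F$ and preservation of the relevant equalizers, follow from the $M$-cover axiom and from left exactness together with the existence of all limits in $M(X)$.

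It remains to identify coalgebras for $FG$ with descent data. Applying the Beck-Chevalley isomorphism of axiom (3) to the pullback square defining $U_{ij}$ gives, for each pair $(i,j)$, a natural identification $f_{i}^{*}(f_{j})_{*}\cong (p_{i})_{*}p_{j}^{*}$ where $p_{i},p_{j}$ denote the projections out of $U_{ij}$; under this identification a coalgebra structure on a tuple $(y_{i})_{i}$ unpacks into a family of isomorphisms $\theta_{ij}:p_{j}^{*}y_{j}\xrightarrow{\sim} p_{i}^{*}y_{i}$ in $M(U_{ij})$. The counit axiom for the coalgebra forces $\theta_{ii}=\mathrm{id}$, and coassociativity, unpacked over $U_{ijk}$ via a second application of Beck-Chevalley, becomes precisely the cocycle condition $\theta_{ik}=\theta_{ij}\circ\theta_{jk}$. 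Running the resulting equivalence in reverse recovers the equalizer formula for $p_{*}$ stated in the remark. The main obstacle is this last translation: making the coassociativity axiom for the comonad $FG$ on the product category coincide with the cocycle condition on triple overlaps is notationally heavy and requires coherent choices of pullbacks representing $U_{ij}$ and $U_{ijk}$, but no new conceptual input is needed beyond iterated applications of Beck-Chevalley.
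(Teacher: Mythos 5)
Your overall strategy---Beck's comonadicity theorem applied to $F=\prod_i f_i^{*}:M(X)\to\prod_i M(U_i)$, followed by the Beck--Chevalley isomorphisms of axiom (3) to translate $FG$-coalgebras into descent data---is the standard Bénabou--Roubaud-style argument. The paper itself offers no proof here: the stack property is imported from Toen--Vaqui\'e, where essentially this argument is the intended one. So the approach is sound; the problem is one specific step that you dispatch in a single sentence.

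The genuine gap is ``I may assume $I$ is finite.'' The category $Desc(\mathcal U/X,M)$ is built from the \emph{entire} family, so replacing $\mathcal U$ by a finite jointly conservative subfamily $\mathcal V\subset\mathcal U$ changes the category whose equivalence with $M(X)$ you must establish; that the restriction functor $Desc(\mathcal U/X,M)\to Desc(\mathcal V/X,M)$ is an equivalence is a refinement-invariance lemma which itself requires proof (typically by pulling $\mathcal V$ back to each $U_i$, $i\in I$, applying the finite-cover case over each $U_i$, and using flatness of all the $f_i$). Nor can you evade this by running your argument with the full index set: your dictionary between coalgebras and descent data needs the canonical map $f_i^{*}\left(\prod_j (f_j)_{*}y_j\right)\to \prod_j f_i^{*}(f_j)_{*}y_j$ to be invertible, and left exactness of $f_i^{*}$ only yields this for \emph{finite} products, so for infinite $I$ that step fails. (Conservativity of $F$, by contrast, needs no finiteness: joint conservativity of a subfamily trivially gives it for the whole family.) Two smaller points: your claim that $G$ is conservative ``because each $(f_i)_{*}$ is'' is both unnecessary---Beck's theorem only requires conservativity of the left adjoint $F$---and unjustified as stated, since $G(y_i)_i=\prod_i (f_i)_{*}y_i$ and an isomorphism between products does not detect isomorphisms factorwise; and the counit axiom gives $\Delta^{*}\theta_{ii}=\mathrm{id}$ only after restriction along the diagonal $U_i\to U_{ii}$, not $\theta_{ii}=\mathrm{id}$ on $U_{ii}$ itself, though this does not disturb the translation into the cocycle condition.
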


If $(\C,\x,\1)$ a symmetric monoidal category satisfying the conditions stated before and $Comm(\C)$ denotes the category of commutative algebras in $\C$, then the category of affine schemes over $\C$ is defined as $Aff_{\C}:=Comm(\C)^{op}$, the seudo functor $M$ assigns to each affine scheme $X=Spec(A)$, the category of $A$-modules $\mod_{\C}(A)$ and for any morphism $f:Spec(B)\to Spec(A)$, $f^{*}:\mod_{\C}(A)\to \mod_{\C}(B)$ is given by the base change $-\X{A}B$. The topology induced by the seudo functor $M$ is called the faithfully flat quasi-compact  (fpqc) topology. The Zariski topology in $Aff_{\C}$ is defined as follows:
\begin{defi}
The family $(f_{i}:X_{i}\to X)_{i\in I}$ in $Aff_{\C}$ is a Zariski covering if it is an $M$-faithfully flat family such that each morphism $f_{i}:X_{i}\to X$ is an epimorphism of finite presentation.
\end{defi}

\begin{cor}\label{subcanonical}
For every $X\in Aff_{\C}$ the presheaf $h_{X}$ is a sheaf with respect to the faithfully flat topology.
\end{cor}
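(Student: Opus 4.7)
The plan is to derive the statement directly from Theorem \ref{M stack} by applying the descent equivalence to the tautological descent datum on $B$. Writing $X = Spec(A)$ and fixing an fpqc covering corresponding to a faithfully flat family $(B \to B_i)_{i \in I}$ in $Comm(\C)$, the sheaf condition for $h_X$ amounts to proving that
\begin{equation*}
\Hom{A}{B}{Comm(\C)} \to \prod_i \Hom{A}{B_i}{Comm(\C)} \rightrightarrows \prod_{i,j} \Hom{A}{B_i \tens{B} B_j}{Comm(\C)}
\end{equation*}
is an equalizer of sets.

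The main step is to apply Theorem \ref{M stack} to the descent datum $(B_i, \mathrm{can}_{ij})$, which is $p^{*}(B)$ for the tautological module $B \in \mod_\C(B)$. The explicit formula for $p_{*}$ in the Remark following the theorem identifies $B$ with the equalizer, in $\mod_\C(B)$, of
\begin{equation*}
\prod_i B_i \rightrightarrows \prod_{i,j} B_i \tens{B} B_j,
\end{equation*}
where the restriction functors $(p_i)_{*} : \mod_\C(B_i) \to \mod_\C(B)$ are the forgetful functors right adjoint to base change. Since the forgetful $\mod_\C(B) \to \C$ is itself right adjoint to $B \x -$, it preserves this equalizer, giving the same diagram in $\C$. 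Both parallel arrows are morphisms of commutative algebras (componentwise $b \mapsto b \x 1$ and $b \mapsto 1 \x b$), and products and equalizers in $Comm(\C)$ are created by the forgetful functor to $\C$, so the same equalizer description persists in $Comm(\C)$.

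To conclude, I apply $\Hom{A}{-}{Comm(\C)}$, which as a representable functor preserves all limits, turning the equalizer above into exactly the required sheaf diagram of sets.

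The only technical point requiring care is the finite subset $J \subset I$ entering the definition of an $M$-cover: the equalizer formula in the Remark is stated over the full $I$, while flatness and joint conservativity are only guaranteed on $J$. This is handled by replacing the family with the single faithfully flat morphism $B \to \prod_{i \in J} B_i$, running the argument above for this one-element cover, and then deducing the sheaf condition for the original indexed family. I expect no substantive obstacle beyond this bookkeeping, as the proof is essentially a formal consequence of the stack property combined with representability.
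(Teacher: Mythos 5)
Your proof is correct and is essentially the paper's intended argument: Corollary \ref{subcanonical} is presented as a direct consequence of the stack theorem, and your derivation---evaluating the descent equivalence on the tautological datum $p^{*}(B)$, reading off the equalizer $B \cong \mathrm{Eq}\bigl(\prod_{i} B_{i} \rightrightarrows \prod_{i,j} B_{i}\tens{B}B_{j}\bigr)$ first in $\mod_{\C}(B)$, then in $\C$ and $Comm(\C)$, and finally applying the representable functor $\Hom{A}{-}{Comm(\C)}$---is exactly that derivation. Your closing worry about the finite subset $J$ is unnecessary bookkeeping, since the stack theorem and the limit formula in the subsequent remark are stated for the full covering family indexed by $I$; the finiteness in the definition of an $M$-cover plays no role here.
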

As in the classical setting in algebraic geometry, a relative scheme is that of a sheaf which has a Zariski open covering by affine schemes. In order to define $\C$-schemes the Zariski topology has to be extended to $Sh(Aff_{\C})$.

\begin{defi}\hfill
\begin{enumerate}
\item Let $X\in Aff_{\C}$ and $F\subset X$ a sub sheaf.  $F$ is said to be an open Zariski of $X$ if there exists a family of open Zariski $\{X_{i}\to X\}_{i\in I}$ in $Aff_{\C}$ such that  $F$ is the image of the morphisms of sheaves $\coprod_{i\in I}X_{i}\to X$.

\item $f:F\to G$ in $Sh(Aff_{\C})$ is an open Zariski (open Zariski immersion, open sub functor) if for every affine scheme $X$ and every morphism $X\to G$ the induced morphism $F\times_{G}X\to X$ is a monomorphism with image a Zariski open of $X$, i.e., $F\times_{G}X$ is a Zariski open of $X$.
\end{enumerate}
\end{defi}
\begin{defi}
A sheaf $F\in Sh(Aff_{\C})$ is a scheme relative to $\C$ or a $\C$-scheme if there exists a family $\{X_{i}\}_{i\in I}\in Aff_{\C}$ such that for all $i$ there exists $X_{i}\to F$ satisfying
\begin{enumerate}
\item The morphism $X_{i}\to F$ is a Zariski open of $F$ for all $i$.
\item The induced morphism $p:\coprod_{i\in I}X_{i}\to F$ is an epimorphism of sheaves.
\end{enumerate}
 \end{defi}
 
%if $\1$ is of finite presentation and projective in $\C$, $A$ is also of finite presentation and projective in $\mod_{\C}(A)$. 

\section{Some Commutative Algebra in symmetric categories.}\label{defs preliminares}
In this section we prove several lemmas needed in order to prove that what we define as the projective space is in fact a $\C$-scheme. These lemmas are the relative version of very well-known results in algebraic geometry.\\
% Lemmas \ref{partition of unity} and \ref{generating is a zariski cov} are usuful to prove Lemma \ref{complementary open}. This Lemma, together with Lemma \ref{epi of sheaves} play an important role, as they are needed to prove that the projective space has a Zariski covering. In this section we also define {\it line objects} and give some of their properties, as well as examples in very well-known categories like $A$-mod and $Qcoh(X)$.
For $A\in Comm(\C)$, we say that $(f_{i})_{i\in I}\subset Hom_{A}(A, A)$ is a generating family of $A$ if $\coprod_{i}\varphi_{f_{i}}:\coprod A\to A$ is an epimorphism. A finite collection of $(f_{i})_{i\in J}\subset Hom_{A}(A, A)$ is a partition of unity if there exists arrows $(s_{i})_{i\in J}\subset Hom_{A}(A,A)$ such that $\sum_{i\in J}s_{i}f_{i}=1$. 

\begin{lemma}[Partition of Unity]\label{partition of unity}
Let $(f_{i}:A\to A)$ be a generating family, then $(f_{i})_{i\in I}$ form a partition of unity on $A$.
\end{lemma}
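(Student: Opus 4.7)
The plan is to reduce the statement to a finite-support argument for maps of sets, via the functor $V_0^A := \mathrm{Hom}_A(A,-)$ on $\mod_\C(A)$. Because $\mod_\C(A)$ is again an abelian strong relative context (as noted in the introduction), $V_0^A$ preserves and reflects epimorphisms and filtered colimits, and, being representable, also preserves arbitrary products. Under the canonical ring isomorphism $\mathrm{Hom}_A(A,B) \cong V_0(B)$, each $\varphi_{f_i}: A \to A$ corresponds to multiplication by $f_i$, and the identity to $1 \in V_0(A)$.

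First I would apply $V_0^A$ to the given epimorphism $\coprod_{i\in I}\varphi_{f_i}: \coprod_{i\in I}A \to A$, producing a surjection of sets; in particular $1 \in V_0^A(A)$ admits some preimage. Next, I would write $\coprod_{i\in I} A = \mathrm{colim}_{J} \coprod_{i\in J} A$ as a filtered colimit over the finite subsets $J \subset I$. Since $V_0^A$ preserves filtered colimits, every preimage of $1$ has finite support, i.e.\ it comes from some $V_0^A(\coprod_{i\in J}A)$. In the abelian category $\mod_\C(A)$, finite coproducts agree with finite products, and $V_0^A$ preserves products, so one has an identification $V_0^A(\coprod_{i\in J} A) \cong V_0^A(A)^J$. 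The preimage is therefore represented by a tuple $(s_i)_{i\in J} \in \mathrm{Hom}_A(A,A)^J$.

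Finally, unpacking the map $V_0^A(A)^J \to V_0^A(A)$: its $i$-th component sends $s_i$ to $\varphi_{f_i}\circ s_i$, which under the isomorphism $\mathrm{Hom}_A(A,A) \cong V_0(A)$ equals $f_i s_i$. The defining property of the preimage then reads $\sum_{i\in J} s_i f_i = 1$, which is exactly the partition of unity condition. The one delicate point is the very first claim, namely that $V_0^A$ really does preserve filtered colimits and epimorphisms on $\mod_\C(A)$; this is the ``inheritance'' statement about abelian strong relative contexts and boils down to the fact that the forgetful functor $\mod_\C(A) \to \C$ creates filtered colimits and detects epimorphisms, after which the assumed properties of $V_0$ on $\C$ transfer automatically.
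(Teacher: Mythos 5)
Your proof is correct, and it reorganizes the paper's argument rather than replicating it. The paper extracts finiteness first and \emph{downstream}: it forms the filtered system of finitely generated ideals $I_{J}=\langle f_{i_{1}},\dots,f_{i_{k}}\rangle\subset A$ over finite $J\subset I$, identifies $A$ with $\mathrm{colim}_{J}\, I_{J}$ (this is where the epimorphy of the family enters), and uses finite presentability of $A$ to factor the identity through some $I_{J}$; only then does it invoke projectivity of $A$ to lift $1$ through the finite coproduct $\coprod_{i\in J}A\epi A$ and read off the sections $(s_{i})_{i\in J}$. You instead lift first, over the full index set, and extract finiteness \emph{upstream}, writing $\coprod_{i\in I}A\cong \mathrm{colim}_{J}\coprod_{i\in J}A$ and using preservation of filtered colimits by $V_{0}^{A}=\mathrm{Hom}_{A}(A,-)$ to see that the lift of $1$ has finite support. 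Both routes rest on exactly the same hypotheses ($\1$, hence $A$ in $\mod_{\C}(A)$ by the inheritance remark in the introduction, is a projective finitely presentable generator), but yours has a small advantage: it bypasses the paper's implicit claim that $A\cong \mathrm{colim}_{J}\, I_{J}$, which strictly speaking requires knowing that the filtered colimit of the subobjects $I_{J}$ computes the image of $\coprod_{i\in I}\varphi_{f_{i}}$ (an exactness property of filtered colimits in $\mod_{\C}(A)$); in your version all the work happens at the level of free modules, where $\mathrm{Hom}_{A}(A,\coprod_{i\in J}A)\cong \mathrm{Hom}_{A}(A,A)^{J}$ is immediate from the biproduct structure, and the commutativity of the ring $\mathrm{Hom}_{A}(A,A)\cong V_{0}(A)$ makes $\sum_{i\in J}s_{i}f_{i}=\sum_{i\in J}f_{i}s_{i}=1$ unambiguous. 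The trade-off is that the paper's intermediate conclusion --- that any generating family contains a finite generating subfamily, $A=\langle f_{i_{1}},\dots,f_{i_{k}}\rangle$ --- is isolated as a reusable statement, which your streamlined argument absorbs into a single lifting step.
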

\begin{proof}
Let us see that $(f_{i})_{i\in I}$ can be reduced to a finite family. In fact, for each finite subset $J=\{i_{1},\cdots i_{k}\}\subset I$ consider the generated ideal $I_{J}=<f_{i_{1}},\cdots,f_{i_{k}}>$. Then, these ideals determine a filtered diagram as shown above
$$
\xymatrix@R=0.8ex{
<f_{i}>\ar^{}[rd]	&	& & \\
	&	<f_{i},f_{j}>\ar^{}[rd]	& &\\
<f_{j}>\ar[ru]\ar[rd]	&		&  <f_{i},f_{j},f_{k}> & \cdots\\
	&	<f_{j},f_{k}>\ar[ru]	&  &\\\
<f_{k}>\ar[ru]	&	&	&	
}
$$
Since the family $(f_{i})_{i\in I}$ is epimorphic in $\mod_{\C}(A)$ then $A$ is the filtered colimit of these ideals, i.e., 
$$A\cong colim <f_{i_{1}},\cdots,f_{i_{k}}>.$$
Because $A$ is finitely presented in $\mod_{\C}(A)$,  we have the isomorphism
$$
Hom_{A}(A,A)\cong colim_{J\subset I}Hom_{A}(A,I_{J}).
$$
Then there exists and index $k$ such that the identity arrow $1:A\to A$ factorizes through $<f_{i_{1}},\dots,f_{i_{k}}>$, that is to say $A\cong<f_{i_{1}},\cdots,f_{i_{k}}>$.\\
Now, let us see that the finite family indexed by $J$ is a partition of unity. 
%the diagram above is commutative
%$$
%\xymatrix@C=0.8ex@R=0.7ex{
%A\ar^{1}[rr]\ar[rd] &	& A\\
%		& \quad<f_{i}>_{i\in J}\ar[ru]
%}
%$$ 
As we have an epimorphism $\xymatrix@-0.8pc{\coprod A\ar@{->>}^{~~(f_{i_{j}})}[r]& A}$ and $A$ is projective, there is a surjection 
$$
\xymatrix{Hom_{A}(A,\coprod_{i\in J}A)\ar@{->>}[r]^{~~(f_{i_{j}})^{*}} &Hom_{A}(A,A)}.
$$ 
Using the isomorphism $Hom_{A}(A,\coprod_{i\in J}A)\cong \prod_{i\in J}Hom_{A}(A,A)$, we have that for the identity arrow $1:A\to A$ there exists a family $(s_{i})_{i\in J}$ such that $\sum_{i\in J}s_{i}\circ f_{i}=1$
\end{proof}

\begin{lemma}\label{generating is a zariski cov}
Let $(\xymatrix@-1pc{A\ar^{f_{i}}[r]& A})_{i\in I}$ be a generating family. Then $(Spec(A_{f_{i}})\to Spec A)_{i}$ is a Zariski covering.
\end{lemma}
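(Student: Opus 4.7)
The plan is to verify the three ingredients in the definition of a Zariski covering: (a) each map $A\to A_{f_i}$ in $Comm(\C)$ is an epimorphism of finite presentation, so that $Spec(A_{f_i})\to Spec(A)$ is a Zariski open immersion; (b) the family of base-change functors $-\tens{A}A_{f_i}$ is $M$-flat; and (c) some finite subfamily of them is jointly conservative on $\mod_{\C}(A)$. The decisive input for (c) is Lemma \ref{partition of unity}: it produces a finite $J\subseteq I$ together with morphisms $s_i:A\to A$ satisfying $\sum_{i\in J}s_if_i=1$, and this $J$ is the witness of finite joint conservativity.

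Conditions (a) and (b) are standard properties of the localization $A_{f_i}$. Describing $A_{f_i}$ as the filtered colimit in $\mod_{\C}(A)$ of
$$A\xrightarrow{f_i}A\xrightarrow{f_i}A\xrightarrow{f_i}\cdots$$
exhibits it as a filtered colimit of flat $A$-modules, hence flat, giving (b); and the universal property of $A_{f_i}$ as the initial $A$-algebra in which $f_i$ becomes invertible simultaneously yields finite presentation of $A\to A_{f_i}$ and shows that this map is a categorical epimorphism in $Comm(\C)$.

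For (c) I would equivalently show that if $P\in\mod_{\C}(A)$ satisfies $P\tens{A}A_{f_i}=0$ for every $i\in J$, then $P=0$. Applying the functor $V_0=Hom_{\C}(\1,-)$, which preserves filtered colimits, one obtains
$$V_0(P\tens{A}A_{f_i})\;=\;colim\bigl(V_0(P)\xrightarrow{f_i}V_0(P)\xrightarrow{f_i}\cdots\bigr),$$
which is exactly the ordinary localization of the $V_0(A)$-module $V_0(P)$ at $f_i$. The hypothesis thus forces, for each $p\in V_0(P)$ and each $i\in J$, the existence of an integer $n_i\geq 0$ with $f_i^{n_i}p=0$. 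Choosing such $n_i$ and any $N\geq\sum_{i\in J}n_i$, the identity $1=\bigl(\sum_{i\in J}s_if_i\bigr)^N$ expands by the multinomial theorem into a sum of monomials, each of which (by pigeonhole on the exponents) must contain some factor $f_i^{n_i}$, and hence annihilates $p$. Therefore $p=0$, so $V_0(P)=0$, and the conservativity of $V_0$ forces $P=0$.

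The main obstacle is the careful translation of the classical element-chasing argument into the abstract categorical framework: one must interpret multiplication by $f_i\in V_0(A)$ on $V_0(P)$, identify $V_0$ applied to the tensor-localization with the ordinary localization of the underlying abelian group, and justify the monomial manipulation via the ring structure on $V_0(A)$. These translations are precisely what the axioms of an abelian strong relative context, in particular that $\1$ is a projective finitely presentable generator and that $V_0$ preserves filtered colimits, are designed to support; once they are in place, the combinatorial core of the argument reduces to the classical proof for rings.
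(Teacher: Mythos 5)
Your proof is correct, and it diverges from the paper's proof precisely at the decisive step. The paper's argument is essentially three lines: it asserts without proof that each $A\to A_{f_i}$ is a flat epimorphism of finite presentation, invokes Lemma \ref{partition of unity} to cut down to a finite subfamily $(f_j)_{j\in J}$ forming a partition of unity, and then outsources the joint conservativity of $\mod(A)\to\prod_{j}\mod(A_{f_j})$ entirely to the citation \cite[Proposition 2.7]{schemesmoncat}. You follow the same skeleton for the flat/epi/finite-presentation part (where you actually sketch proofs, e.g.\ flatness via $A_{f_i}=colim(A\xrightarrow{f_i}A\xrightarrow{f_i}\cdots)$, which the paper merely asserts), but you prove the conservativity in-house: identifying $V_{0}(P\tens{A}A_{f_i})$ with the ordinary localization of the $V_0(A)$-module $V_0(P)$ (legitimate since $-\tens{A}A_{f_i}$ is this filtered colimit and $V_0$ preserves filtered colimits) and then running the classical multinomial/pigeonhole argument with $1=\bigl(\sum_{j\in J}s_jf_j\bigr)^{N}$, after which conservativity of $V_0$ kills $P$. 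This is in substance a direct proof of the cited proposition, so your route buys self-containedness and makes visible exactly where the strong-relative-context axioms enter ($\1$ a projective finitely presentable generator, hence $V_0$ conservative and filtered-colimit-preserving), at the cost of length. Three small points you should make explicit to be airtight: (i) your reformulation of joint conservativity as ``$P\tens{A}A_{f_j}=0$ for all $j\in J$ implies $P=0$'' is equivalent to the definition only because the base-change functors are exact --- apply the zero-criterion to $\ker g$ and $\mathrm{coker}\,g$, using the flatness you established; (ii) the multinomial expansion needs the ring $End_{A}(A)\cong V_0(A)$ to be commutative, which holds because $A$ is a commutative algebra in a symmetric monoidal category; (iii) ``a filtered colimit of flat modules is flat'' uses exactness of filtered colimits in $\mod_{\C}(A)$, available here because $\mod_{\C}(A)$ is again an abelian strong relative context.
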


\begin{proof}
Each $A\to A_{f_{i}}$ is a flat epimorphism of finite presentation. By Lemma \ref{partition of unity}, there exists a finite subset $J\subset I$ such that the family $(f_{j})_{j\in J}$ is a partition of unity. In \cite[Proposition 2.7]{schemesmoncat} the author shows that the family of functors $\mod (A)\to \prod_{i} \mod(A_{f_{i}})$ is jointly conservative.
\end{proof}
\begin{defi}
Let $X=Spec A$ in $Aff_{\C}$, $I\mono A$ an ideal.  There is a subfunctor of $X$ associated to the ideal $I$ defined by: $U_{I}(B)=\{u:A\to B~:~BI\cong B\}$ where $BI=Im(\xymatrix{B\x I\ar^{u\circ j_{I}\x B}[r]& B\x B\ar^{m_{B}}[r] & B})$.
\end{defi}

\begin{lemma}[Complementary open sub-scheme]\label{complementary open}
 $U_{I}$ is  a $\C$-scheme.
\end{lemma}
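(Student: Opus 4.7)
The plan is first to verify that $U_{I}$ is a sheaf, and then to exhibit a Zariski-open cover of $U_{I}$ by affine schemes. Since $U_{I}$ is a subfunctor of the sheaf $h_{Spec(A)}$ (Corollary \ref{subcanonical}), for the sheaf axiom it suffices to show that the condition ``$BI=B$'' is local for the fpqc topology. Given $u:A\to B$ and a faithfully flat cover $(B\to B_{j})_{j\in J}$, base change along each $B\to B_{j}$ sends the inclusion $BI\mono B$ to $B_{j}I\mono B_{j}$, since $B_{j}\X{B}-$ is exact and hence commutes with the image construction defining $BI$. Joint conservativity of a finite subfamily then implies that $B_{j}I=B_{j}$ for all $j$ forces $BI=B$; the converse direction is clear.

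For the cover, since $A$ is a projective generator of $\mod_{\C}(A)$, the ideal $I\mono A$ admits a generating family: morphisms $(g_{i}:A\to I)_{i\in I_{0}}$ making $\coprod_{i\in I_{0}}A\epi I$ an epimorphism (take for instance $I_{0}=Hom_{A}(A,I)$). Setting $f_{i}=j_{I}\circ g_{i}:A\to A$, each $f_{i}$ becomes invertible in the localization $A_{f_{i}}$. Since $f_{i}$ factors through $I$, it follows that $A_{f_{i}}I=A_{f_{i}}$, so the canonical morphism $Spec(A_{f_{i}})\to Spec(A)$ factors through $U_{I}$.

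I claim the family $\{Spec(A_{f_{i}})\to U_{I}\}_{i\in I_{0}}$ exhibits $U_{I}$ as a $\C$-scheme. For any $Y=Spec(B)\to U_{I}$, the fibered product $Spec(A_{f_{i}})\times_{U_{I}}Y$ coincides with $Spec(A_{f_{i}})\times_{Spec(A)}Y=Spec(B_{u(f_{i})})$ because $U_{I}\mono Spec(A)$ is a monomorphism; this is a Zariski open of $Y$, so $Spec(A_{f_{i}})\to U_{I}$ is a Zariski open immersion. For the epimorphism condition, given $u\in U_{I}(B)$ the family $(u(f_{i}):B\to B)_{i\in I_{0}}$ generates $B$: applying the colimit-preserving functor $B\X{A}-$ to $\coprod A\epi I$ and composing with the multiplication $B\x B\to B$ yields $\coprod B\epi BI=B$. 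By Lemma \ref{generating is a zariski cov}, $\{Spec(B_{u(f_{i})})\to Y\}$ is a Zariski cover, so $\coprod_{i}Spec(A_{f_{i}})\to U_{I}$ is epimorphic in the category of sheaves.

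The main technical obstacle lies in the descent step: justifying that the formation of $BI$ — defined as the image of a composition of tensor products and multiplication — commutes with faithfully flat base change, which ultimately rests on the exactness properties packaged into the notion of an abelian strong relative context.
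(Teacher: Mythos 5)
Your proof is correct and takes essentially the same route as the paper's: the sheaf condition via joint conservativity of a flat cover together with flat base change of the image defining $BI$, the affine cover by $Spec(A_{f_{i}})$ for a generating family of $I$, the identification $Spec(A_{f_{i}})\times_{U_{I}}Spec(B)\cong Spec(B_{u(f_{i})})$, and the epimorphism condition via the induced generating family $(u(f_{i}))$ of $B$ (Lemmas \ref{partition of unity} and \ref{generating is a zariski cov}). Your only additions are explicit justifications the paper leaves implicit, such as extracting the generating family of $I$ from the fact that $A$ is a projective generator, and the monomorphism argument allowing the fiber product over $U_{I}$ to be computed over $Spec(A)$.
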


\begin{proof}

First we prove that $U_{I}$ is a sub sheaf. Let $(B\to B_{i})_{i\in J}$ be a Zariski covering and let $(f_{i})_{i}$ be a compatible family in $\prod_{i}U(B_{i})\mono \prod_{i}h_{A}(B_{i})$. Since $h_{A}$ is a sheaf, there exists a unique $f\in h_{A}(B)$ whose restrictions to every open $Spec(B_{i})$ is $f_{i}$. Let us check that this $f$ is in fact a section in $U(B)$, i.e., $f:A\to B$ induces an isomorphism  $BI\cong B$. Since the $B_{i}$ form an open covering for $B$ we have that family  of functors  
$$
-\X{B}B_{i}:\mod (B)\to \mod(B_{i})
$$
is jointly conservative, so if we consider the inclusion $BI\mono B$, we know that for every $i\in J,~$ $BI\X{B}B_{i}\cong B_{i}I\overset{\sim}{\rightarrow}B_{i}$, therefore $BI\cong B$.

%We now prove that $U$ has a Zariski open covering:
Now we show that if $(f_{i})_{i}\subset Hom_{A}(A, A)$ is a generating family of the ideal $I$ then $U_{i}=Spec(A_{f_{i}})\to U$ is a Zariski open immersion and $\{U_{i}\to U\}_{i\in J}$ is a Zariski covering.  First, note that by the universal property of localizations 
\begin{equation}\label{U_{i}}
U_{i}(B)=Hom_{Comm(\C)}(A_{f_{i}},B)\cong\{f:A\to B~:~ B<f_{i}>\cong B\}.
\end{equation}
Moreover, the inclusion $U_{i}\to Spec(A)$ induces a morphism $U_{i}\to U$, by (\ref{U_{i}}) this morphism is a monomorphism. We will check that this morphism is in fact a Zariski open immersion. 
Let $Spec B\in Aff_{\C}$ and  $u:SpecB\to U$ and consider the pullback diagram
$$
\xymatrix@-1pc{
U_{i}\times_{A}SpecB\ar^{}[r]\ar^{}[d] & U_{i}\ar^{}[d]\ar@/^{1pc}/^{}[rdd] & \\
Spec B\ar^{u}[r]\ar@/_{1pc}/^{}[rrd]                      & U\ar^{}[rd]     &\\
                                                                        &                      & Spec A,
}
$$
we have to prove that $U_{i}\times_{A}SpecB\to SpecB$ is a Zariski open immersion. To give the morphism $u:SpecB\to U$ is the same as giving an element in $U(B)$, that is to say, a morphism $ u:A\to B$ such that $IB\cong B$, then the result follows by the isomorphism $U_{i}\times _{A} Spec B\cong Spec B_{u(f_{i})}$, where $u(f_{i}):\xymatrix{A\ar^{f_{i}}[r] &A\ar^{u}[r]& B}$ and $SpecB_{u(f_{i})}\to Spec B$ is a Zariski open, therefore $U_{i}\to U$ is a Zariski open.\\
On the other hand,  in view of $BI\cong B$,  $(u(f_{i}))_{i}$ is a generating family of B as an $A$-algebra. This family can be reduced to a finite family $(u(f_{j}))_{j\in J}$, thus by Lemma \ref{partition of unity}, $\coprod_{j\in J}Spec B_{uf_{j}}\to Spec B$ is an epimorphism of sheaves so is $\coprod_{j\in J}U_{j}\to U$
\end{proof}

We now give a sufficient condition for a morphism of sheaves to be an epimorphism. This result is analogous to its classical counterpart and it is very useful in order to prove that the projective space is in fact a scheme as it is covered by affine Zariski open immersions.
\begin{lemma}\label{epi of sheaves}
Let $\{U_{i}\to F\}$ be a finite family of affine Zariski open immersions in $Sh(Aff_{\C})$. If for every field object $K\in Comm(\C)$,  $~\coprod_{i}U_{i}(K)\to F(K)$ is surjective then $\coprod_{i} U_{i}\to F$ is an epimorphism of sheaves.
\end{lemma}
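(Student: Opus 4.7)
The plan is to verify the local lifting characterization of epimorphisms of sheaves: $\coprod_{i}U_{i}\to F$ is an epimorphism in $Sh(Aff_{\C})$ if and only if, for every affine scheme $Spec(B)$ and every morphism $x:Spec(B)\to F$, there exists a Zariski covering of $Spec(B)$ on each piece of which $x$ lifts through some $U_{i}$. Fix such an $x$ and form the pullback $V_{i}:=U_{i}\times_{F}Spec(B)$. Since each $U_{i}\to F$ is a Zariski open immersion, by definition each $V_{i}$ is a Zariski open of $Spec(B)$, hence the image of a family of affine Zariski opens $\{Spec(B_{g_{i,\alpha}})\to Spec(B)\}_{\alpha}$; by Lemma \ref{complementary open} one has $V_{i}=U_{I_{i}}$, where $I_{i}\subset B$ is the ideal generated by the family $(g_{i,\alpha})_{\alpha}$.

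It then suffices to show that the combined family $\{Spec(B_{g_{i,\alpha}})\to Spec(B)\}_{i,\alpha}$ is a Zariski covering of $Spec(B)$, because on each member of this covering the restriction of $x$ factors through the corresponding $U_{i}$ via the projection $V_{i}\to U_{i}$. By Lemma \ref{generating is a zariski cov} combined with Lemma \ref{partition of unity} (to reduce to a finite subfamily), this in turn reduces to checking that $(g_{i,\alpha})_{i,\alpha}$ is a generating family of $B$ as a $B$-module, i.e.\ that the ideal $I:=\sum_{i}I_{i}$ equals $B$.

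I would prove $I=B$ by contradiction. If $I\subsetneq B$, then $B/I$ is a nonzero commutative algebra in $\C$, and using that $\1$ is a projective generator together with a Zorn-type maximal ideal argument, one produces a surjection $B/I\twoheadrightarrow K$ in $Comm(\C)$ with $K$ a field object. The composite $Spec(K)\to Spec(B)\to F$ is then a $K$-point of $F$, and by hypothesis it lifts to some element of $U_{i}(K)$. This lift yields a $K$-point of $V_{i}=U_{I_{i}}$; by the description of $U_{I_{i}}(K)$ in Lemma \ref{complementary open}, such a $K$-point is a morphism $u:B\to K$ satisfying $K\cdot u(I_{i})=K$. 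But $I_{i}\subset I$ is annihilated by $u$, so $K\cdot u(I_{i})=0\neq K$, a contradiction. Hence $I=B$ and the lemma follows.

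The main obstacle is precisely the production of the residue-field quotient $B/I\twoheadrightarrow K$ in the third paragraph: one needs an analogue of Krull's theorem (existence of maximal ideals in nonzero commutative algebras) valid in the abstract monoidal setting, together with the assertion that the resulting quotient is a field object in $\C$. This is exactly where the abelian strong relative context hypotheses — conservativity of $V_{0}$ and the generator / finite presentability of $\1$ — do the essential work, allowing the classical Zorn argument to be transported through $V_{0}$.
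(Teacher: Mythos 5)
Your proposal runs on the same engine as the paper's proof --- produce a residue field $K=B/\mathfrak m$ at a maximal ideal containing a suitable proper ideal, use the field-point hypothesis to lift through some $U_{i}$, and derive a contradiction --- but the packaging differs, and one step of yours does not hold up. The paper reduces to $F=Spec(A)$ and proves directly that the finite family of base-change functors $\mod_{\C}(A)\to\mod_{\C}(A_{j})$ is jointly conservative: given $0\neq M$ it embeds some $A/I\mono M$, takes the field point $\varphi:A\to A/\mathfrak m$ with $\mathfrak m\supset I$, lifts it through some $u_{j}:A\to A_{j}$, picks a maximal ideal $\mathfrak m_{j}$ containing $Ker(\varphi_{j})$, shows $\mathfrak m=u_{j}^{-1}(\mathfrak m_{j})$ by maximality, and uses flatness to conclude $A_{j}/A_{j}I\neq 0$, hence $A_{j}\X{A}M\neq 0$. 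Your contradiction (the point $u:B\to K$ kills $I_{i}$, while membership in $U_{I_{i}}(K)$ forces $Ku(I_{i})=K$) is cleaner than this maximal-ideal bookkeeping, but it is purchased with an identification the paper never establishes.

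That identification is the genuine gap: the claim that ``by Lemma \ref{complementary open} one has $V_{i}=U_{I_{i}}$,'' with $I_{i}$ generated by elements $g_{i,\alpha}\in Hom_{B}(B,B)$ and $V_{i}$ the image of the basic localizations $Spec(B_{g_{i,\alpha}})$. Lemma \ref{complementary open} goes only in the opposite direction: starting from an ideal with a generating family it produces the open $U_{I}$ together with its covering by localizations; it does not assert that an arbitrary Zariski open of $Spec(B)$ arises this way. In this framework a Zariski open $V_{i}\subset Spec(B)$ is, by definition, the image of a family of flat epimorphisms of finite presentation $B\to B_{\alpha}$, and nothing in the paper reduces such $B_{\alpha}$ to localizations at elements; classically that reduction rests on the nontrivial theorem that a flat finitely presented ring epimorphism is an open immersion (so its image is a finite union of basic opens), and no analogue is proved in this abstract setting. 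Consequently your reduction to ``$\sum_{i}I_{i}=B$, then Lemma \ref{partition of unity} and Lemma \ref{generating is a zariski cov}'' is not available as written. To repair it, either prove that in an abelian strong relative context every Zariski open of an affine scheme equals $U_{I}$ for an element-generated ideal $I$, or bypass the classification as the paper does and prove joint conservativity of the pulled-back family directly by the flatness/maximal-ideal argument (note the paper itself quietly treats the pullbacks $U_{i}\times_{F}Spec(B)$ as affine, a simplification any fix will have to confront). Your closing observation is accurate on the remaining point: the existence of maximal ideals with field-object quotients is the shared crux, and the paper also only cites it (as a proposition on existence of prime ideals) rather than proving it, so there you are in the same position as the authors.
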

\begin{proof}
It is enough to prove the lemma for $F=SpecA$ since a necessary and sufficient condition for $G\to F$ to be a sheaf epimorphism is that for every affine scheme $Spec A$, $SpecA\times_{F}G\to Spec A$ is an epimorphism. In this case, we have to check that for each $\xymatrix{U_{j}=Spec(A_{j})\ar^{\quad u_{j}}[r] &Spec(A)}$, the family of functors $\mod(A)\to \mod(A_{j})$ is jointly conservative.\\
Let $0\neq M\in \mod(A)$, we will prove that $M_{j}:=A_{j}\X{A}M\neq 0$ for all j. As $M\neq 0$, then $M$ contains a submodule of the form $A/I$.  In fact, there is a non zero $f:A\to M$, so we take $I=ker f$, then we have the factorization
$$
\xymatrix@C=0.9ex{
A\ar^{f}[rr]\ar@{->>}[rd]  &  & M\\
	& A/I\quad~~\ar@{>->}[ru]	&
}
$$
 Let $\mathfrak m$ be a maximal ideal containing $I$, its existence is proven in Proposition \ref{existence of prime ideals}, then the morphism $\varphi$ from $A$ to the field object  $K=A/\mathfrak m$ represents an element in $F(K)$. As we have a surjective function $\coprod_{i}U_{i}(K)\to F(K)$, the element $\varphi$ seen as an arrow factorizes through some $u_{j}:A\to A_{j}$, this means that there exists $\varphi_{j}$ such that the diagram commutes
 $$
 \xymatrix@C=2.5ex{ A\ar_{u_{j}}[rd]\ar^{\varphi}[rr] &  &  K\\
 	& A_{j}\ar_{\varphi_{j}}[ru] & .
 }
 $$
Now, by the universal property of $Ker\varphi_{j}$, there exist a unique morphism $\mathfrak m\to Ker\varphi_{j}$, then we have the pullback diagram
$$
\xymatrix@-0.8pc{
\mathfrak m \ar@{-->}^{}[rd] \ar@/_1pc/[rdd]\ar@/^{1pc}/[rrd]  &    &       \\
            & u_{j}^{-1}(Ker \varphi_{j})\ar^{}[r] \ar@{>->}^{}[d] &  Ker \varphi_{j}\ar@{>->}^{}[d]\\
           & A\ar^{u}[r]\ar^{\varphi}[d]   & A_{j}\ar^{\varphi_{j}}[d]    \\
           & K\ar@{=}^{}  [r]  &    K
}
$$
with the morphism $\mathfrak m\to u_{j}^{-1}(Ker\varphi_{j})$ being a monomorphism.\\
Let $\mathfrak m_{j}$ be a proper maximal ideal containing  $Ker(\varphi_{j})$, since $u_{j}$ is flat we have that $u_{j}^{-1}(Ker\varphi_{j})\mono u^{-1}_{j}(\mathfrak m_{j})$, then $\mathfrak m\mono u^{-1}_{j}(\mathfrak m_{j})$. We claim that $u^{-1}(\mathfrak m_{j})$ is a proper ideal of $A$.  In fact, if $u^{-1}(\mathfrak m_{j})=A$, then the morphism $u_{j}:A\to A_{j}$ factorizes through $A\epi \mathfrak m_{j}$, but since $\mathfrak m_{j}$ is a proper ideal this is a contradiction. \\
By maximality $\mathfrak m=u^{-1}_{j}(\mathfrak m_{j})$. Then we have the commutative diagram 
$$
\xymatrix{
\mathfrak m_{}\ar^{}[r]\ar@{>->}[d]&  \mathfrak m_{j}\ar@{>->}[d]\\
A\ar^{u_{j}}[r]	& A_{j}
}
$$
tensoring with the $A$-algebra $A_{j}$ we have a morphism $A_{j}\X{A}\mathfrak m_{j}\cong A_{j}\mathfrak m\longrightarrow \mathfrak m_{j}$ commuting with the inclusion to $A_{j}$. Then this morphism must be a monomorphism. On the other hand, we have a monomorphism $\xymatrix{A_{j}I\quad\ar@{>->}[r]& A_{j}\mathfrak m}$, it follows that $\mathfrak m_{j}$ contains the ideal $A_{j}I$ then $A_{j}/A_{j}I\neq 0$ and we have a monomorphism
$$
\xymatrix{A/I\X{A}A_{j}\cong A_{j}/A_{j}I\quad\ar@{>->}^{} [r]&A_{j}\X{A}M=M_{j},}
$$
this means that $M_{j}\neq 0$ for all $j$, therefore $\{U_{i}\to Spec A\}_{i}$ is a Zariski covering.
 \end{proof}

%%%%%%%%%%%%Lema 3.15 demazure

The next tool we need in order to construct the projective space is the following two lemmas, they are the relative version of a well-known result in commutative algebra, it concerns about the stability of direct summands of a finitely presented module. We first introduce some notation. Let $A\to B$ be a morphism in $Comm(\C)$ and let $M,N$ be two $A$-modules, we would like to define a morphism 
$$
\zeta: B\X{A}hom_{A}(M,N)\to hom_{B}(B\X{A}M,B\X{A}N).
$$
We have the morphism $1\x\varepsilon:B\X{A}M\X{A}hom_{A}(M,N)\to B\X{A}N$ which by adjunction corresponds to a morphism 
$$
\xymatrix{hom_{A}(M,N)\ar^{\chi\qquad\quad}[r] &hom_{A}(B\X{A}M,B\X{A}N).}
$$
On the other hand, as $B\X{A}M$ and $B\X{A}N$ are $B$-modules, the object $hom_{A}(B\X{A}M,B\X{A}N)$ is also a $B$-module, with action 
$$
\mu:B\X{A}hom_{A}(B\X{A}M,B\X{A}N)\to hom_{A}(B\X{A}M,B\X{A}N)
$$
by composing these two morphism, we have the morphism
$$
\xymatrix{\xi:B\X{A}hom_{A}(M,N)\ar^{1\x \chi\qquad}[r] & B\X{A}hom_{A}(B\X{A}M,B\X{A}N)\ar^{\quad\mu}[r]& hom_{A}(B\X{A}M,B\X{A}N).
}
$$ 
It's not hard to see that $\xi$ equalizes the two morphisms 
$$
\xymatrix{hom_{A}(B\X{A}M,B\X{A}N)\ar@<-0.5ex>[r]\ar@<0.5ex>[r]& hom_{B}(B,hom_{A}(B\X{A}M,B\X{A}N))
}
$$ 
and since $hom_{B}(B\X{A}M,B\X{A}N)$ is, by definition the equalizer of these two morphisms, there exists an arrow $\zeta:B\X{A}hom_{A}(M,N)\to hom_{B}(B\X{A}M,B\X{A}N)$.\\

With notations as above we have the following results:
\begin{lemma}\label{lema 4.6}
If $A\to B\in Comm(\C)$ is faithfully flat and $M$ is finitely presentable, then the induced morphism 
$$
\zeta:B\tens{A}hom_{A}(M,N)\to hom_{B}(B\tens{A}M,B\tens{A}N)
$$
is an isomorphism.
\end{lemma}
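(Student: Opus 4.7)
The plan is to carry out a standard reduction parallel to the classical commutative-algebra proof: verify the claim when $M=A$, extend to finite coproducts of $A$ by additivity, and then use a finite presentation of $M$ together with flatness of $B$ to conclude via the five lemma.

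First I would check that $\zeta$ is an isomorphism for $M=A$. Under the canonical identifications $hom_{A}(A,N)\cong N$ and $hom_{B}(B\X{A}A,B\X{A}N)\cong hom_{B}(B,B\X{A}N)\cong B\X{A}N$, an unwinding of the construction of $\zeta$ through the evaluation map, its adjoint $\chi$, and the $B$-module action $\mu$ shows that $\zeta_{A}$ becomes the identity.

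Second, I would extend this to $M=\coprod_{i=1}^{n}A$, a finite coproduct of copies of $A$. Since $hom_{A}(-,N)$ turns coproducts in its first argument into products, which agree with coproducts in the additive category $\mod_{\C}(A)$, and since the left adjoint $B\X{A}-$ preserves finite coproducts, naturality of $\zeta$ in $M$ shows that $\zeta_{\coprod_{i=1}^{n}A}$ decomposes as the $n$-fold direct sum of $\zeta_{A}$ and is therefore an isomorphism.

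Third, I would take a presentation
\[
\coprod_{j\in J}A \longrightarrow \coprod_{i\in I}A \longrightarrow M \longrightarrow 0
\]
with $I,J$ finite, which exists because $M$ is finitely presentable in the locally finitely presentable category $\mod_{\C}(A)$. Applying the left exact functor $hom_{A}(-,N)$ followed by $B\X{A}-$ (exact by flatness of $B$) produces a left exact sequence ending at $B\X{A}hom_{A}(M,N)$. On the other side, right exactness of $B\X{A}-$ gives a presentation $\coprod_{j\in J}B \to \coprod_{i\in I}B \to B\X{A}M \to 0$; applying $hom_{B}(-,B\X{A}N)$ then yields a left exact sequence ending at $hom_{B}(B\X{A}M,B\X{A}N)$. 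Naturality of $\zeta$ produces a commutative ladder between the two left exact sequences whose two right-hand vertical arrows are the isomorphisms from the previous step, and the universal property of kernels (equivalently, the five lemma) forces $\zeta_{M}$ to be an isomorphism.

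The main obstacle I expect is the justification of the two categorical inputs hidden inside this classical-looking outline: that finite presentability of $M$ in $\mod_{\C}(A)$ really produces a presentation by finite coproducts of $A$ (this should follow from the hypothesis that $\1$ is a projective, finitely presentable generator of $\C$, a property that transfers to $\mod_{\C}(A)$), and that the ad hoc definition of $\zeta$ is genuinely natural in $M$, so that the comparison ladder in the last step commutes. Both amount to a careful chase through the adjunctions and exactness properties used to construct $\zeta$. I would also note in passing that faithfulness of $A\to B$ is not used; flatness alone suffices for this lemma.
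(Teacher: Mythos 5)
Your proposal is correct and takes essentially the same route as the paper, whose proof simply invokes the classical argument of \cite[Proposition 2.10]{CommAlgEisenbud} and highlights exactly the point you flag as the main obstacle: that finite presentability of $M$ (preservation of filtered colimits by $Hom$) yields an actual finite presentation $\1^{m}\to\1^{n}\to M$, thanks to the hypothesis that $\1$ is a projective finitely presentable generator. Your closing remark that flatness alone suffices and faithfulness is not used is also accurate.
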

\begin{proof}
The proof of this result is verbatim of the classical result given in \cite[Proposition 2.10]{CommAlgEisenbud}. The key is that under our hypothesis on $\C$, an object $M\in \C$ is finitely presentable in the sense that the functor $Hom_{\C}(M,-)$ preserves filtered colimits if and only if it has a finite presentation, that is to say, there exist integers $n,m$ such that the following diagram is exact
$$
\xymatrix{\1^{m}\ar^{}[r]& \1^{n}\ar^{}[r]& M}
$$
%That $M$ is finitely presentable if and only if $M$ has a finite presentation.\\
%Suppose $M=A$:
%$$
%\xymatrix{B\X{A}hom_{A}(A,N)\ar^{\cong}[d]\ar^{\zeta\quad}[r] & hom_{B}(B\X{A}A,B\X{A}N)\ar_{\cong}[d]\\
% B\X{A}N\ar_{\cong\quad\qquad\qquad}[r] & hom_{B}(B,B\X{A}N)\cong B\X{A}N
% }
%$$
%we have that $\zeta$ is an isomorphism.\\
%Now suppose $M=A^{m}$. Since $hom_{A}(M,-), B\X{A}-$ are additive functors, we have the isomorphisms
%$$
%\xymatrix{
%B\X{A}hom_{A}(A^{m},N)\ar^{\cong}[d]\ar^{\zeta\quad\quad}[r] & hom_{B}(B\X{A}A^{m},B\X{A}N)\ar_{\cong}[d]\\
% B\X{A}N^{m}\ar_{\cong\quad\quad\quad\quad\quad}[r]	& hom_{B}(B^{m},B\X{A}N) \cong B\X{A}N^{m}
% }
%$$
%with $\zeta$ the direct sum of the isomorphisms given in the previous case.\\%$\oplus_{1}^{m}\varphi_{A}$.\\
%For the general case, let $A^{m}\to A^{n}\to M\to 0$ be a finite presentation for $M$. Tensoring this presentation with $B$ we obtain again an exact sequence $$B^{m}\to B^{n}\to B\X{A}M\to 0.$$
%Applying $B\X{A}hom_{A}(-,N)$ and $hom_{B}(-,B\X{A}N)$ to both exact sequences respectively, we obtain the commutative diagram with exact rows
%$$
%\xymatrix{
%0\ar^{}[r] & B\tens{A}hom_{A}(M,N)\ar^{}[r]\ar@{-->}[d] & B\tens{A}hom_{A}(A^{n},N)\ar^{}[r]\ar^{\cong}[d] & B\tens{A}hom_{A}(A^{m},N)\ar^{\cong}[d]\\
%0\ar^{}[r] & hom_{B}(B\tens{A}M,N)\ar^{}[r] & hom_{B}(B\tens{A}A^{n},N)\ar^{}[r] & hom_{B}(B\tens{A}A^{m},N),
%}
%$$
%The exactness in the left of the first row is due to the fact $A\to B$ is faithfully flat.  The first column is an isomorphism since they are both the limits of isomorphic sequences.
\end{proof}

\begin{lemma}\label{lema 3.15 Demazure}
Let $A\to B\in Comm(\C)$ faithfully flat, $M\in \mod_{}(A)$ of finite presentation and $L$ an $A$-submodule of $M$. If $B\X{A}L$ is a direct summand of $B\X{A}M$ then $L$ is a direct summand of $M$.
\end{lemma}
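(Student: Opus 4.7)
The idea is to identify the existence of a retraction of $u: L \hookrightarrow M$ with the vanishing of a distinguished morphism into a certain cokernel of internal-hom objects, and to reduce this vanishing to the analogous one over $B$ (where it holds by hypothesis) by means of Lemma~\ref{lema 4.6} and faithful flatness.

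The preparatory step is to observe that $L$ is itself finitely presented: since $B\X{A}L$ is a direct summand of the finitely presented $B$-module $B\X{A}M$, it is finitely presented, and the property of being finitely presented descends along the faithfully flat morphism $A \to B$. This descent is standard in classical commutative algebra, but in the present relative setting it has to be extracted from the conservativity, exactness and filtered-colimit preservation built into the definition of an abelian strong relative context; I expect this to be the main technical hurdle.

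With $L$ finitely presented, I would form in $\mod_\C (A)$ the cokernel
\[
Q := \mathrm{coker}\bigl( u^* : hom_A(M,L) \to hom_A(L,L) \bigr),
\]
where $u^*$ denotes pre-composition with $u$. Let $\iota : A \to hom_A(L,L)$ denote the morphism corresponding to $\mathrm{id}_L$. Since $\1$, and hence $A$, is projective, $u$ admits a retraction if and only if the composite $A \xrightarrow{\iota} hom_A(L,L) \twoheadrightarrow Q$ vanishes.

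Applying the right exact functor $-\X{A}B$ together with Lemma~\ref{lema 4.6} to both $hom_A(M,L)$ and $hom_A(L,L)$, and using the naturality of $\zeta$, I would identify $Q \X{A} B$ with the analogous cokernel
\[
Q' := \mathrm{coker}\bigl( hom_B(B\X{A}M, B\X{A}L) \to hom_B(B\X{A}L, B\X{A}L) \bigr)
\]
computed on the $B$-side. The hypothesis supplies a retraction $\pi : B\X{A}M \to B\X{A}L$, which witnesses that $1_{B\X{A}L}$ lies in the image of the map whose cokernel defines $Q'$, so $\iota \X 1_B$ vanishes in $Q \X{A} B$. Conservativity of $-\X{A}B$ arising from faithful flatness then forces $\iota$ itself to vanish in $Q$, exhibiting $L$ as a direct summand of $M$.
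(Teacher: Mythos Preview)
Your approach is essentially that of the paper. The paper is slightly more direct: rather than tracking the single morphism $\iota$ into the cokernel $Q$, it shows outright that $u^{*}:hom_{A}(M,L)\to hom_{A}(L,L)$ is an epimorphism, by observing that after applying $B\X{A}-$ and invoking the isomorphisms $\zeta_{1},\zeta_{2}$ of Lemma~\ref{lema 4.6}, the map becomes the (split) epimorphism $hom_{B}(B\X{A}M,B\X{A}L)\to hom_{B}(B\X{A}L,B\X{A}L)$; faithful flatness then forces $u^{*}$ itself to be an epimorphism, and projectivity of $A$ produces the retraction. Your preparatory step---that $L$ must first be shown finitely presented before Lemma~\ref{lema 4.6} can be applied to $\zeta_{2}$---is a point the paper passes over in silence; in the paper's only application $L$ is a line object, hence dualizable and automatically finitely presented in an abelian strong relative context, so the gap is harmless there, but you are right that the lemma as stated requires it.
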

\begin{proof}
$L$ is a direct summand of $M$ if and only if there exists a morphism $r:M\to L$ such that $r\circ i=1_{L}$. This is equivalent to prove that the function between the homs is surjective, i.e., $Hom_{A}(M,L)\epi Hom_{A}(L,L)$.\\
Since the forgetful functor $Hom_{A}(A,-)$ preserves epimorphisms, it is enough to prove that $\varphi:hom_{A}(M,L)\to hom_{A}(L,L)$ is an epimorphism.  The result comes from the following commutative diagram
$$
\xymatrix{
B\tens{A}hom_{A}(M,L)\ar_{\zeta_{1}\cong }[d]\ar^{B\x\varphi}[r] & B\tens{A}hom_{A}(L,L)\ar^{\cong \zeta_{2}}[d]\\
hom_{B}(B\tens{A}M,B\tens{A}L)\ar@{->>}[r]^{\psi}& hom_{B}(B\tens{A}L,B\tens{A}L).
}
$$
$\psi$ is an epimorphisms since $B\X{A}I$ is a direct summand of $B\X{A}M$ and the forgetful functor $Hom_{B}(B,-)$ reflects epimorphisms. Lemma \ref{lema 4.6} shows that $\zeta_{1}$ and $\zeta_{2}$ are isomorphisms, therefore $B\x \psi$ is an epimorphism and we get the result.
\end{proof}

\subsubsection*{\Large{Line Objects.}}

Next, following \cite{Saav, brandenburg}  we review the definition and properties of the objects that make possible the definition of the projective space. This kind of objects are the categorification of  rank one invertible sheaves over a scheme.

\begin{defi}[Invertible object]
If $\C$ is a symmetric monoidal category, $L\in \C$ is called invertible if there exists an object $L^{\vee}$ and an isomorphism $\delta:\1\to L^{\vee}\x L$. 
\end{defi}
Note that if $L$ is invertible then  $L\x -:\C\to\C$ is an equivalence with inverse $L^{\vee}\x -$.
\begin{obs}\label{invertible=proyectivo}\hfill
\begin{enumerate}
\item $\1$ is invertible and invertible objects are closed under tensor products. Isomorphisms classes of invertible objects form a group denoted $Pic(\C)$. For more details on $Pic(\C)$, see \cite{Maypic}. 
\item If $L$ is invertible, then for every isomorphism $\delta:\1\to L\x L^{\vee}$ there exists an isomorphism $\epsilon:L^{\vee}\x L\to \1$ satisfying the triangle axioms 
$$
\vcenter{
\xymatrix{
\1\x L\ar^{\delta\x 1}[r]\ar@{=}[d] & L\x L^{\vee}\x L\ar^{\epsilon\x 1}[ld]\\
	 \1\x L &
}
}
\cenesp
\vcenter{
\xymatrix{
\1\x L^{\vee}\ar^{\delta\x 1}[r]\ar@{=}[d] & L\x L^{\vee}\x L\ar^{\epsilon\x 1}[ld]\\
	\1\x L^{\vee} &
}}
$$
therefore $(L,L^{\vee},\epsilon,\delta)$ is a duality in $\C$.
\item If $L$ is invertible and $\1$ is projective, then $L$ is a projective object in $\C$. In fact, since $L$ is invertible, $hom_{\C}(L,-)$ is left adjoint to $hom_{\C}(L^{\vee},-)$, therefore it preserves colimits. As $Hom_{\C}(L,-)\cong Hom_{\C}(\1,hom_{\C}(L,-))$ and $Hom_{\C}(\1,-)$ preserves epimorphisms we have that $Hom_{\C}(L,-)$ preserves epimorphisms.
\end{enumerate}
\end{obs}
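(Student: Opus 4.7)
The plan is to establish the three assertions of the remark in turn, each by a standard piece of symmetric monoidal yoga.

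For (1), I would note that $\1$ is its own inverse via the unitor $\1\x\1\cong\1$, and that if $L,L'$ are invertible with duals $L^{\vee},L'^{\vee}$, then $L'^{\vee}\x L^{\vee}$ serves as an inverse of $L\x L'$, constructed by combining the witnessing isomorphisms with the associators and the symmetry to shuffle the six tensor factors into the desired order. The group axioms for $Pic(\C)$ then follow formally from monoidal coherence: associativity from the associator, identity from $\1$, and inverses from duals.

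For (2), starting from $\delta:\1\to L\x L^{\vee}$, I would define $\epsilon:L^{\vee}\x L\to\1$ as the composite
\[
L^{\vee}\x L\xrightarrow{\sigma} L\x L^{\vee}\xrightarrow{\delta^{-1}}\1,
\]
where $\sigma$ is the braiding. To verify the triangle identities I would exploit that $L\x-$, being an equivalence, is faithful, so it suffices to check the triangles after tensoring with $L$ or $L^{\vee}$; this reduces the identities to equalities involving only $\delta$, $\delta^{-1}$ and the braiding, which are forced by the symmetric monoidal axioms. I expect this coherence bookkeeping to be the main obstacle, as it is the least mechanical of the three steps.

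For (3), the key observation is that since $L\x-$ is an equivalence with quasi-inverse $L^{\vee}\x-$, the functor $L^{\vee}\x-$ is simultaneously a left and right adjoint to $L\x-$. By the closed monoidal structure, $hom_{\C}(L,-)$ is the right adjoint of $L\x-$, so by uniqueness of adjoints $hom_{\C}(L,-)\cong L^{\vee}\x-$; in particular $hom_{\C}(L,-)$ is itself a left adjoint and therefore preserves colimits, and hence epimorphisms. The natural isomorphism
\[
Hom_{\C}(L,-)\cong Hom_{\C}(\1,hom_{\C}(L,-))
\]
then presents $Hom_{\C}(L,-)$ as a composite of two functors that each preserve epimorphisms (the inner one by what was just shown, and $Hom_{\C}(\1,-)$ because $\1$ is projective by hypothesis), so $L$ is projective.
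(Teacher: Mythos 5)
Your items (1) and (3) are correct; (3) in particular is essentially the paper's own inline argument (your extra identification $hom_{\C}(L,-)\cong L^{\vee}\x -$ via uniqueness of adjoints is a harmless sharpening of the paper's statement that $hom_{\C}(L,-)$ is left adjoint to $hom_{\C}(L^{\vee},-)$).

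Item (2), however, contains a genuine gap. The morphism $\epsilon:=\delta^{-1}\circ\sigma_{L^{\vee},L}$ does \emph{not} satisfy the triangle identities in general, and the failure is not coherence bookkeeping that faithfulness of $L\x -$ can absorb: coherence forces commutativity only of diagrams built from the structural isomorphisms (associators, unitors, symmetry), whereas $\delta$ is an arbitrary isomorphism, not a structural one. In fact the first triangle composite
$$
L\cong \1\x L\xrightarrow{\ \delta\x 1\ } L\x L^{\vee}\x L\xrightarrow{\ 1\x\epsilon\ } L\x \1\cong L
$$
is, under the bijection $End_{\C}(\1)\cong End_{\C}(L)$, exactly the signature of $L$ --- the invariant the paper introduces immediately after this remark --- and it need not be the identity. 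Concretely, in $\ent$-graded vector spaces with the Koszul symmetry take $L=k\,e$ in degree $1$, $L^{\vee}=k\,e^{*}$ in degree $-1$, and $\delta(1)=e\x e^{*}$; then $\sigma_{L^{\vee},L}(e^{*}\x e)=-\,e\x e^{*}$, so $\epsilon(e^{*}\x e)=-1$ and the displayed composite is $-1_{L}$. Note also that if your claim were true, every invertible object would have trivial signature, i.e.\ would automatically be a line object, which would render vacuous the paper's subsequent definitions of signature and line object and the distinction on which Section 3 relies.

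The statement itself is still true; the standard repair (cf.\ Brandenburg's thesis, to which the paper defers for this material) is a rescaling. Set $\epsilon_{0}:=\delta^{-1}\circ\sigma_{L^{\vee},L}$, let $c\in Aut_{\C}(L)$ be the first triangle composite for $\epsilon_{0}$, and let $u\in Aut_{\C}(\1)$ correspond to $c$ under the ring isomorphism $End_{\C}(\1)\cong End_{\C}(L)$ (which exists because $L\x -$ is an equivalence). Define $\epsilon:=u^{-1}\circ\epsilon_{0}$; since $End_{\C}(\1)$ acts centrally, the first triangle identity now holds by construction. For the second triangle, its composite $a=(\epsilon\x 1)(1\x\delta)\in End_{\C}(L^{\vee})$ is an isomorphism (both factors are, $\delta$ and $\epsilon$ being isomorphisms) and, given the first triangle identity, is idempotent by the interchange law; an idempotent isomorphism is the identity, so $a=1_{L^{\vee}}$ and $(L,L^{\vee},\epsilon,\delta)$ is a duality.
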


Now, for invertible objects there is a well defined signature
\begin{defi}[Signature]
Since $L\x -$ is an equivalence we have bijections $End_{\C}(\1)\cong End_{\C}(L)\cong End_{\C}(L\x L)$ then the signature is the endomorphism of $\1$ corresponding to the symmetry $\sigma_{L,L}:L\x L\to L\x L$ via that bijection. 
\end{defi}

\begin{defi}[Line object.]
$L\in \C$ is called a line object if it is invertible and its signature is the identity morphism. 
\end{defi}

\begin{obs}
An object $M$ in $\C$  is said to be {\it symtrivial} if $\sigma_{M,M}:M\x M\to M\x M$ is the identity arrow. Since the signature of an invertible object $L$ in $\C$ is the endomorphism associated to the symmetry of $L\x L$, then a line object is simply an invertible symtrivial object.
\end{obs}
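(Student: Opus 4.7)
The plan is to verify that, for an invertible object $L$, the line-object condition (signature equal to $1_{\1}$) coincides with symtriviality ($\sigma_{L,L}=1_{L\x L}$), by tracing the identity morphism through the chain of bijections
$$End_{\C}(\1)\cong End_{\C}(L)\cong End_{\C}(L\x L)$$
used in defining the signature.

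First I would exploit the fact that $L$ is invertible, which makes $L\x -:\C\to\C$ an equivalence and in particular fully faithful. Thus for any $X,Y\in\C$ the assignment $f\mapsto 1_{L}\x f$ yields a bijection $\Hom{X}{Y}{\C}\to\Hom{L\x X}{L\x Y}{\C}$ that visibly preserves identity morphisms, since $1_{L}\x 1_{X}=1_{L\x X}$ by functoriality of $\x$. Taking $X=Y=\1$ and composing with the unitor $L\x\1\cong L$ produces a bijection $\varphi_{1}:End_{\C}(\1)\to End_{\C}(L)$ with $\varphi_{1}(1_{\1})=1_{L}$. Repeating the argument with $L$ in place of $\1$ gives $\varphi_{2}:End_{\C}(L)\to End_{\C}(L\x L)$ with $\varphi_{2}(1_{L})=1_{L\x L}$.

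By definition the signature $s\in End_{\C}(\1)$ is the unique element for which $(\varphi_{2}\circ\varphi_{1})(s)=\sigma_{L,L}$. Because the composite bijection sends $1_{\1}$ to $1_{L\x L}$, one concludes $s=1_{\1}$ if and only if $\sigma_{L,L}=1_{L\x L}$, which is exactly the asserted equivalence between \emph{line object} and \emph{invertible symtrivial object}.

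The only mildly delicate point is the bookkeeping with the unit and associativity isomorphisms needed to identify $L\x\1$ with $L$ (and the analogous identifications at the level of $\varphi_{2}$), but this is precisely the sort of coherence check that the MacLane coherence theorem for symmetric monoidal categories makes automatic. I do not expect any genuine obstacle.
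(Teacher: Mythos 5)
Your proof is correct and matches the paper's (implicit) reasoning: the remark is stated there as immediate from the definition of the signature, and your unwinding of the bijections $End_{\C}(\1)\cong End_{\C}(L)\cong End_{\C}(L\x L)$ --- noting that they send $1_{\1}$ to $1_{L\x L}$ because $L\x -$ is fully faithful and $\x$ is functorial --- is precisely the intended justification. The coherence bookkeeping you flag is indeed harmless, so there is nothing to add.
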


\begin{proposition}\label{prop line objects}\hfill
\begin{enumerate}
\item Symtrivial objects are preserved by strong monoidal functors.
\item If $\C$ is cocomplete then $M\oplus N$ is symtrivial if and only if $M\x N=0$ and $M, N$ are symtrivial.
\item  Let $A$ be a faithfully flat commutative algebra in $\C$, $L\in \C$. If $A\x L$ is a line object in $\mod_{\C}(A)$ then  $L$ is a line object in $\C$ .
\item If $L$ is a line object in $\C$, then every epimorphism $\1\to L$ is an isomorphism.
\end{enumerate}
\end{proposition}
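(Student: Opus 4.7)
For (1), a strong symmetric monoidal functor $F\colon\C\to\C'$ comes with structural isomorphisms $F(M)\x F(M)\cong F(M\x M)$ intertwining $\sigma_{F(M),F(M)}$ with $F(\sigma_{M,M})$, so symtriviality is transported. For (2), since $\C$ is closed and cocomplete, $\x$ distributes over $\oplus$, yielding
\[
(M\oplus N)\x(M\oplus N)\;\cong\;(M\x M)\oplus(M\x N)\oplus(N\x M)\oplus(N\x N),
\]
on which $\sigma_{M\oplus N,M\oplus N}$ restricts to $\sigma_{M,M}$ and $\sigma_{N,N}$ on the diagonal factors and swaps the two off-diagonal summands via $\sigma_{M,N}$. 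Projecting the condition $\sigma_{M\oplus N,M\oplus N}=\mathrm{id}$ onto each summand reads off $\sigma_{M,M}=\mathrm{id}=\sigma_{N,N}$ and forces the inclusion $M\x N\mono(M\oplus N)\x(M\oplus N)$ to coincide with its image under $\sigma$, which lies in the disjoint $N\x M$-summand; this is possible only if $M\x N=0$. The converse is immediate: with $M\x N=0$ the decomposition collapses to $(M\x M)\oplus(N\x N)$ and $\sigma$ becomes $\sigma_{M,M}\oplus\sigma_{N,N}=\mathrm{id}$.

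\textbf{Part (3).} The base-change functor $A\x-\colon\C\to\mod_\C(A)$ is strong symmetric monoidal and, because $A$ is faithfully flat, also faithful (it is exact and annihilates no nonzero object). By (1), $\sigma_{A\x L,A\x L}=\mathrm{id}$ reduces to $A\x\sigma_{L,L}=A\x\mathrm{id}$, and faithfulness then yields $\sigma_{L,L}=\mathrm{id}$. For invertibility, set $L^{\vee}:=hom_\C(L,\1)$ with its canonical evaluation $\mathrm{ev}\colon L^{\vee}\x L\to\1$. Since $A\x L$ is invertible it is dualizable, hence finitely presentable, in $\mod_\C(A)$; by faithfully flat descent of finite presentation (using that $\1$ is a finitely presentable generator), $L$ is finitely presentable in $\C$. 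Lemma~\ref{lema 4.6} applied with $B=A$, $M=L$, $N=\1$ then identifies $A\x L^{\vee}$ with $hom_A(A\x L,A)$, and under this identification $A\x\mathrm{ev}$ becomes the canonical evaluation $hom_A(A\x L,A)\X{A}(A\x L)\to A$, which is an iso by dualizability of $A\x L$. Faithful flatness reflects isomorphisms, so $\mathrm{ev}$ is itself an iso and $L^{\vee}$ is a dual of $L$. The main hurdle here is the descent of finite presentation; once Lemma~\ref{lema 4.6} is applicable, the rest assembles mechanically.

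\textbf{Part (4).} By Remark~\ref{invertible=proyectivo}(3) the line object $L$ is projective, so the epi $p\colon\1\to L$ admits a section $s\colon L\to\1$ with $ps=\mathrm{id}_L$. The idempotent $sp\colon\1\to\1$ yields a splitting $\1\cong L\oplus K$ in $\C$, with $K$ the complementary summand. Tensoring on the left with $L^{\vee}$ produces $L^{\vee}\cong\1\oplus(L^{\vee}\x K)$. Since $L^{\vee}$ is itself a line object (the inverse of a line object is again invertible and symtrivial), part (2) applied to this decomposition forces $L^{\vee}\x K=0$; because $L^{\vee}\x-$ is an equivalence, $K=0$. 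Hence $\1\cong L$, and $p$ is inverse to $s$.
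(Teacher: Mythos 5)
The paper offers no argument at all for this proposition --- its proof is the single line ``For details see \cite{brandenburg}'' --- so your write-up supplies proofs the paper omits. For parts (1), (2) and (4) what you supply is correct and is essentially the standard argument from that reference: the componentwise analysis of $\sigma_{M\oplus N,M\oplus N}$ on the four summands of $(M\oplus N)\x(M\oplus N)$ (distributivity indeed needs $\x$ to preserve coproducts, which holds since $\C$ is closed), and, for (4), splitting the epimorphism by projectivity of $L$ and killing the complement with (2). In (1) you correctly read ``strong monoidal'' as strong \emph{symmetric} monoidal; without compatibility with the symmetries the statement cannot even be checked, and the paper only ever applies it to base-change functors, which are symmetric. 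One simplification for (4): apply (2) directly to the splitting $\1\cong L\oplus K$, since $\1$ is trivially symtrivial; this gives $L\x K=0$, hence $K\cong L^{\vee}\x L\x K=0$, and it spares you the parenthetical claim that $L^{\vee}$ is again a line object --- that claim is true, but it silently uses multiplicativity of the signature, which you nowhere establish.

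In part (3) the symtriviality half is complete: $A\x-$ is strong symmetric monoidal, and exact plus conservative implies faithful, so $\sigma_{L,L}=\mathrm{id}$ descends. The invertibility half, however, has a genuine gap exactly where you flag it: the descent of finite presentability along $\1\to A$. This is not a formality. Conservativity of $A\x-$ reflects isomorphisms and zero objects, but ``$Hom_{\C}(L,-)$ preserves filtered colimits'' is a property quantified over all filtered colimits of $\C$, and nothing you have said transports it backwards along $A\x-$; meanwhile Lemma \ref{lema 4.6} --- your only bridge between $A\x hom_{\C}(L,\1)$ and $hom_{A}(A\x L,A)$ --- is unusable without it, so the entire second half of the argument hangs on this unproved claim. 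It is true under the paper's standing hypotheses ($\1$ a projective finitely presentable generator, so $\C$ is a Grothendieck category in which every object is the filtered union of its finitely generated subobjects), and the classical two-step proof adapts: write $L=\mathrm{colim}_{i}L_{i}$ over its f.g.\ subobjects; flatness gives $A\x L=\mathrm{colim}_{i}\,A\x L_{i}$ with monic transition maps; since $A\x L$ is finitely presentable (your reduction ``dualizable $\Rightarrow$ f.p.'' is fine, because $A$ itself is f.p.\ in $\mod_{\C}(A)$ as $Hom_{A}(A,-)\cong Hom_{\C}(\1,|-|)$), the identity of $A\x L$ factors through some $A\x L_{i}$, so $A\x(L/L_{i})=0$ and conservativity yields $L=L_{i}$, i.e.\ $L$ is finitely generated; then run the same argument on the kernel of an epimorphism $\1^{n}\epi L$, using exactness of $A\x-$ and the fact that the kernel of an epi from a f.g.\ object onto a f.p.\ object is f.g. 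With this descent lemma proved (or cited), your part (3) closes; the remaining identification of $A\x\mathrm{ev}$ with the evaluation of $A\x L$ is routine naturality, as you say.
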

\begin{proof}
For details see \cite{brandenburg}.
\end{proof}
\begin{ej}\label{ejemplo invertibles}
\begin{enumerate}\hfill
\itemsep-0.5em 
\item Let $R$ be a commutative ring. Then $M$ is a line object in $\mod(R)$ if and only if $M$ is projective module of rank one. 
\item Let $X$ be a scheme, then the line objects in  $Qcoh(X)$ are precisely the invertible sheaves.
\end{enumerate}
\end{ej}

\begin{lemma}\label{product is line object}
Let $(SpecA_{{i}}\to SpecA)_{i\in I}$ be a finite Zariski open covering. Let the $A$-algebra $B=\prod_{i}A_{i}$. If for every $i\in I$,  $L_{i}$ is a line object in $\mod_{\C}(A_i)$ then $J=\prod_{i}L_{i}$ is a line object in $\mod_{\C}(B)$.
\end{lemma}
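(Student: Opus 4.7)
The plan is to exploit the product structure of $B=\prod_i A_i$ to reduce the line-object conditions on $J$ to their componentwise counterparts. Since $I$ is finite, this product coincides with the coproduct, so $B\cong\bigoplus_i A_i$ in $\C$; writing $\iota_i\colon A_i\to B$ and $p_i\colon B\to A_i$ for the corresponding injections and projections of the biproduct, one obtains orthogonal idempotents $e_i:=\iota_i\circ 1_{A_i}\colon \1\to B$ with $\sum_i e_i=1_B$ and $e_i\cdot e_j=\delta_{ij}\,e_i$. Viewing each $L_i$ and $L_i^\vee$ as a $B$-module through $p_i$, the idempotent $e_i$ acts as the identity on $L_i$ and $L_i^\vee$ and as zero on $L_j$ and $L_j^\vee$ for $j\neq i$. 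In particular $J=\prod_i L_i=\bigoplus_i L_i$ in $\mod_\C(B)$, and we take the candidate dual $J^\vee:=\bigoplus_i L_i^\vee$.

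For invertibility, distribute the tensor product:
\[
J\otimes_B J^\vee \;\cong\; \bigoplus_{i,j} L_i\otimes_B L_j^\vee.
\]
When $i\neq j$, the left- and right-actions of $e_i$ on $L_i\otimes_B L_j^\vee$ coincide by the coequalizer definition of $\otimes_B$, yet one is the identity and the other is zero; since $\1$ is a generator, this forces $L_i\otimes_B L_j^\vee=0$. On the diagonal, both factors are $A_i$-modules through $p_i$ (a surjection of algebras), so $L_i\otimes_B L_i^\vee\cong L_i\otimes_{A_i}L_i^\vee\cong A_i$, the last isomorphism coming from $L_i$ being a line object in $\mod_\C(A_i)$. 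Therefore $J\otimes_B J^\vee\cong\bigoplus_i A_i = B$, exhibiting $J$ as invertible with inverse $J^\vee$.

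For the signature, the same vanishing gives $J\otimes_B J\cong\bigoplus_i L_i\otimes_{A_i}L_i$, and on each summand the symmetry $\sigma_{J,J}$ restricts to $\sigma_{L_i,L_i}$, which equals the identity by hypothesis. Hence $\sigma_{J,J}=\mathrm{id}$, so the corresponding endomorphism of $B$ under the signature bijection is $\mathrm{id}_B$, and $J$ is a line object.

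The main technical point is making the idempotent vanishing argument rigorous in this abstract categorical setting: the cross terms $L_i\otimes_B L_j^\vee$ for $i\neq j$ must be shown to be zero using the coequalizer presentation of $\otimes_B$ together with the projective-generator hypothesis on $\1$, which lets one transfer the classical elementwise reasoning about idempotent decompositions of modules over a product ring. Once this vanishing is in hand, both the invertibility computation and the signature computation reduce to straightforward componentwise identifications.
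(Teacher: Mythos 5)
Your argument is correct and is essentially the paper's own proof repackaged in idempotent language: your global idempotents $e_{i}$ are the paper's maps $\lambda^{(i)}:A\to B$, your cross-term vanishing $L_{i}\otimes_{B}L_{j}^{\vee}=0$ is precisely the paper's proof that $r_{ij}=m_{i}\otimes 1-1\otimes m_{j}^{\vee}$ is an epimorphism (so its cokernel vanishes), your diagonal identification $L_{i}\otimes_{B}L_{i}^{\vee}\cong L_{i}\otimes_{A_{i}}L_{i}^{\vee}$ is the paper's step (i) via the splitting $(1\otimes p_{i}\otimes 1)(1\otimes\lambda_{i}\otimes 1)=1$, and your direct computation of $\sigma_{J,J}$ merely inlines the paper's appeal to Proposition \ref{prop line objects}(2). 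One cosmetic remark: the generator hypothesis on $\1$ is superfluous in the vanishing step, since once the identity of $L_{i}\otimes_{B}L_{j}^{\vee}$ is shown to equal zero the object is zero in any abelian category.
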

\begin{proof}
We claim that $J$ has an inverse in $\mod_{\C} (B)$ given by $J^{\vee}=\prod_{i}L_{i}^{\vee}$ with $L_{i}^{\vee}$ is the inverse of $L_{i}$ in $\mod_{\C}(A_{i})$ for all $i\in I$. If $m_{i},m_{i}^{\vee}$ denote the actions of $A_{i}$ on $L_{i}$ and $L_{i}^{\vee}$ respectively, we will prove the following two things:
\begin{itemize}
\item [i.] For every $i\in I$, $L_{i}\X{A_{i}}L_{i}^{\vee}\cong L_{i}\X{B}L_{i}^{\vee}$. let us consider the diagram with exact rows:
\begin{equation}\label{6.1eq2}
\begin{gathered}
\xymatrix{
L_{i}\X{A}B\X{A}L_{i}^{\vee}\ar^{\quad \bar r}[r]\ar^{1\x p_{i}\x 1}[d]&L_{i}\X{A}L_{i}^{\vee}\ar^{\pi}[r] \ar@{=}^{}[d]& L_{i}\X{B}L_{i}^{\vee}\ar@{-->}_{\varphi}[d]\\
L_{i}\X{A}A_{i}\X{A}L_{i}^{\vee}\ar^{\quad r}[r]\ar@/^{1pc}/^{1\x \lambda_{i}\x 1}[u] & L_{i}\X{A}L_{i}^{\vee}\ar^{\pi'}[r] &L_{i}\X{A_{i}}L_{i}^{\vee}\ar@{-->}@/_{1pc}/[u]_{\psi}
}
\end{gathered}
\end{equation}
where $\bar r=\bar{m_{i}}\x 1-1\x \bar{m_{i}}^{\vee}$, $r=m_{i}\x 1-1\x m_{i}^{\vee}$ and $\pi,\pi'$ the cokernel maps. As $\pi'\circ \bar r=\pi'\circ r\circ(1\x p_{i}\x 1)=0$, there exists an arrow $\varphi:L_{i}\X{B}L_{i}^{\vee}\to L_{i}\X{A_{i}}L_{i}^{\vee}$ sucht that $\varphi\pi=\pi'$.\\
 On the other hand, due to $$(1\x p_{i}\x 1)(1\x \lambda_{i}\x 1)=1,$$
 then 
 $$
 \pi\circ r=\pi\circ r(1\x p_{i}\x 1)\circ (1\x \lambda_{i}\x 1)=\pi\circ\bar r(1\x \lambda_{i}\x 1)=0
 $$
 so, there exists an arrow $\psi:L_{i}\X{A_{i}}L_{i}^{\vee}\to L_{i}\X{B}L_{i}^{\vee}$ satisfying $\psi\circ\pi'=\pi$. let us check they are inverse to each other.
 $$
 \psi\varphi\pi=\psi\pi'=\pi,\quad\quad \varphi\psi\pi'=\varphi\pi=\pi'
 $$
since $\pi,\pi'$ are epimorphisms we get $\psi\varphi=1$ and $\varphi\psi=1$.

\item [ii.]For every $i\neq j$, $L_{i}\X{B}L_{j}^{\vee}=0$: in order to prove this, we will prove that 
$$
r_{ij}=m_{i}\x 1-1\x m_{j}^{\vee}:L_{i}\X{A}B\X{A}L_{j}^{\vee}\to L_{i}\X{A}L_{j}^{\vee}
$$
is an epimorphism, thence its cokernel $L_{i}\X{B}L_{j}^{\vee}$ would be the zero object. For this, consider for every $i\in I$, the morphism $\lambda^{(i)}:A\to B$ given by  $(0,\cdots,\eta_{i},0\cdots,)$ with $\eta_{i}:A\to A_{i}$ the unit of $A_{i}$ as an $A$-algebra, in the $i$-th position
%and $\lambda_{j}=0:A\to A_{j}$ if $j\neq i$ for every $i\in I$ then we have that 
$$
 r_{ij}(1\x\lambda_{i}\x 1):L_{i}\X{A}A\X{A}L_{j}^{\vee}\to L_{i}\X{A}B\X{A}L_{j}^{\vee}
$$
is the identity arrow for $i\neq j$, this means that $r_{ij}$ is an epimorphism for $i\neq j$.
%$$
%\xymatrix{L_{i}\X{A}B\X{A}L_{j}^{\vee}\ar@<-1ex>_{r\ \ \ \  \ \  \ \ \quad}[r]& L_{i}\X{A}A\X{A}L_{j}^{\vee}\ar@<-1ex>_{s\quad\quad\ \ \ }[l]\cong L_{i}\X{A}L_{j}^{\vee}}
%$$
\end{itemize}
combining i. and ii. we have that 
\begin{align*}
J\X{B}J^{\vee}\cong &\prod_{i,j}coKer\left(L_{i}\X{A}B\X{A}L_{j}^{\vee}\to L_{i}\X{A}L_{j}^{\vee}\right)\\
\cong&\prod_{i}coKer\left(L_{i}\X{A}B\X{A}L_{i}^{\vee}\to L_{i}\X{A}L_{i}^{\vee}\right)\\
\cong& \prod_{i} L_{i}\X{B}L_{i}^{\vee}\cong\prod_{i} L_{i}\X{A_{i}}L_{i}^{\vee}\cong\prod_{i}A_{i}=B
\end{align*}
Now we show that $J$ is a symtrivial object in $\mod_{\C}(B)$ provided that each $L_{i}$ is symtrivial in $\mod_{\C}(A_{i})$ for all $i$. let us denote $\sigma, \sigma^{i},\sigma^{B}$ the symmetries in $\mod_{\C}(A)$,$\mod_{\C}(A_{i})$, $\mod_{\C}(B)$ respectively and consider the following diagram, where unadorned tensor means $\X{A}$
\begin{equation*}
\xymatrix{
L_{i}\x A_{i}\x L_{i}\ar^{}[r] \ar_{1\x \lambda_{i}\x 1}[d] & L_{i}\x L_{i} \ar^{}[r]\ar@{=}[d] &L_{i}\X{A_{i}}L_{i}\ar^{\psi}[d]\ar@/^{2.5pc}/^{\sigma^{i}_{L_{i},L_{i}}}[ddd]\\
L_{i}\x B\x L_{i}\ar^{}[r] \ar_{(\sigma_{B,L_{i}}\x 1)(1\x \sigma_{L_{i},L_{i}})(\sigma_{L_{i},B}\x 1)}[d] & L_{i}\x L_{i} \ar^{}[r]\ar^{\sigma_{L_{i},L_{i}}}[d] &L_{i}\X{B}L_{i}\ar^{\sigma^{B}_{L_{i},L_{i}}}[d]\\
L_{i}\x B\x L_{i}\ar_{1\x p_{i}\x 1}[d] \ar[r]  &  L_{i}\x L_{i} \ar@{=}^{}[d]\ar^{}[r] & L_{i}\X{B}L_{i}\ar^{\varphi}[d]\\
 L_{i}\x A_{i}\x L_{i}\ar^{}[r]&L_{i}\x L_{i}\ar^{}[r] &L_{i}\X{A_{i}}L_{i}  
}
\end{equation*}
with $\varphi, \psi$ defined as in \ref{6.1eq2} with $L_{i}^{\vee}=L_{i}$.  By naturality of $\sigma$ and the identity
$$
(1\x p_{i}\x 1)(1\x \lambda_{i}\x 1)=1$$
 we have that 
$$
(1\x p_{i}\x 1)(\sigma_{B,L_{i}}\x 1)(1\x \sigma_{L_{i},L_{i}})(\sigma_{L_{i},B}\x 1)(1\x \lambda_{i}\x 1)=(\sigma_{A_{i},L_{i}}\x 1)(1\x \sigma_{L_{i},L_{i}})(\sigma_{L_{i},A_{i}}\x 1)
$$
consequently $\varphi~\sigma^{B}_{L_{i},L_{i}}\varphi^{-1}=\sigma^{i}_{L_{i},L_{i}}=1$ which implies $\sigma^{B}_{L_{i},L_{i}}=1$. \\

So far, we have proved that each $L_{i}$ is a symtrivial object in $\mod_{\C}(B)$. To finally get the result, we use the fact that $L_{i}\X{B}L_{j}=0$ for every $i\neq j$ and proposition \ref{prop line objects} (2), so $\bigoplus_{i}L_{i}$ is symtrivial in $\mod_{\C}(B)$
\end{proof}

\section{{The scheme $\Proj{\C}$}}\label{esquema proyectivo}
%In the sequel $\C$ is a symmetric monoidal abelian category  such that $\1_{\C}$  is compact.\\
As a motivation for the definition of the projective space, we first recall a characterisation of the functor of points of the scheme $\Proj{\ent}$. Let us denote $Mor(X,Y)$ the set of morphisms in the category of schemes $Sch$, then we have that:

\begin{theorem}[See \cite{geomofschemes}]
For any ring $A$, 
\begin{eqnarray*}
&\text{Mor}(Spec A,\Proj{\ent})=\{L\subset A^{n+1}~:L~\text{is a locally rank 1 direct summand of }A^{n+1}\}\\
&\cong\{\text{invertible}~A-\text{modules}~P~\text{with an epimorphism}~A^{n+1}\to P\}/\{\text{isomorphisms}\}
\end{eqnarray*}
where by invertible module we mean a finitely generated, locally free $A$-module of rank 1 and an isomorphism from $\varphi:A^{n+1}\to P$ to $\varphi':A^{n+1}\to P$ is an automorphism $\alpha:P\to P$ such that $\alpha\varphi=\varphi'$.\\
Moreover, for any scheme $X$, one has the natural bijection
$$
Mor(X,\Proj{\ent})=\{\text{Invertible sheaves $P$ in }\mathcal Qcoh(X)~\text{with an epimorphism}~\mathcal O_{X}^{n+1}\to P\}/\{iso\}
$$
\end{theorem}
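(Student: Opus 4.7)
The plan is to proceed via the standard functorial approach. Let $F$ denote the functor on schemes sending $X$ to equivalence classes of epimorphisms $\mathcal{O}_X^{n+1}\epi \mathcal{L}$ with $\mathcal{L}$ an invertible sheaf, two such epimorphisms being equivalent when they differ by an automorphism of the target. First I would verify the equivalence of the two descriptions of the right-hand side: given a rank-one direct summand $L\mono A^{n+1}$ with complementary summand $L'$, dualizing the splitting gives a surjection $A^{n+1}\cong (A^{n+1})^\vee\epi L^\vee$ with $L^\vee$ invertible; conversely, a surjection $\varphi:A^{n+1}\epi P$ with $P$ invertible splits (since $P$ is projective), and the image of $P^\vee\mono (A^{n+1})^\vee\cong A^{n+1}$ is a rank-one direct summand of $A^{n+1}$. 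Modulo the equivalence by automorphisms of the target, these constructions are mutually inverse.

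Next, both $F$ and $h_{\Proj{\ent}}$ are Zariski sheaves, so it suffices to exhibit a natural bijection and glue. On $\Proj{\ent}$ there is a tautological epimorphism $\mathcal{O}^{n+1}\epi \mathcal{O}(1)$, which induces a natural transformation $h_{\Proj{\ent}}\to F$ by pullback along any morphism $X\to \Proj{\ent}$. For the inverse, given $\varphi:\mathcal{O}_X^{n+1}\epi \mathcal{L}$ with global sections $s_i=\varphi(e_i)$, the open locus $D(s_i)\subset X$ where $s_i$ trivializes $\mathcal{L}$ forms an open cover, because the surjectivity of $\varphi$ forces the $s_i$ to generate $\mathcal{L}$ stalkwise. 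On each $D(s_i)$ the ratios $s_j/s_i$ are regular functions, determining a morphism to the $i$-th standard affine chart $U_i=\mathrm{Spec}\,\ent[x_0/x_i,\ldots,x_n/x_i]$ of $\Proj{\ent}$. Compatibility of these ratios on the intersections $D(s_i)\cap D(s_j)$ allows one to glue to a global morphism $X\to \Proj{\ent}$.

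The main obstacle is checking that the two constructions are inverse at the level of equivalence classes; concretely, that pulling back the tautological quotient under the morphism $X\to \Proj{\ent}$ built from $\varphi$ recovers the class of $\varphi$, and that starting from a morphism $X\to \Proj{\ent}$ and passing through the ratio construction reproduces the original morphism. This reduces to a local computation in the standard charts $U_i$, which becomes essentially tautological once one picks the trivialization $\mathcal{L}|_{D(s_i)}\cong \mathcal{O}_{D(s_i)}$ sending $s_i\mapsto 1$. Finally, the statement for an arbitrary scheme $X$ follows from the affine case combined with Zariski descent for both sides.
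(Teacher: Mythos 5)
Your proposal is correct, and it is the standard argument: the paper itself offers no proof of this statement --- it is quoted, as motivation, from the cited reference (Eisenbud--Harris, \emph{The Geometry of Schemes}) --- and your route (dualization to pass between the direct-summand and invertible-quotient descriptions, pullback of the tautological quotient $\mathcal{O}^{n+1}\epi\mathcal{O}(1)$ in one direction, and in the other the cover by the loci where $s_{i}$ generates $\mathcal{L}$ together with the ratio functions $s_{j}/s_{i}$ landing in the standard charts) is exactly the proof found there. It is worth noting that your two key steps are precisely what the paper later abstracts to the relative setting in Section \ref{esquema proyectivo}: your duality between the subobject and quotient pictures is Theorem \ref{los projectivos son isomorfos}, and your charts $D(s_{i})$ with their ratio coordinates correspond to the subfunctors $U_{i}$ of (\ref{abiertos afines}), shown there to be representable by $\mathbb{A}^{n}_{\C}$.
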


Having this characterisation in mind and by example \ref{ejemplo invertibles} and remark \ref{invertible=proyectivo} item iii), we define the projective space relative to the category $\C$ as the functor $\Proj{\C}:Aff_{\C}^{op}\to\mathcal Ens$, as follows:

\begin{defi}\label{def projectivo}[Relative Projective Scheme] Let $n\geq 1$ a fixed integer. For every affine scheme $Spec(A)$ we define $\Proj{\C}(A)$ to be the set of submodules $L$ of $A^{n+1}$ satisfying 
\begin{itemize}
\item $L$ is a line object in $\mod_{\C}(A)$
\item For the monomorphism ${\bf x}:L\to A^{n+1}$, there exists a retraction $A^{n+1}\to L$, this is, $L$ is a direct summand of $A^{n+1}$. 
\end{itemize}
For every morphism $Spec(B)\to Spec(A)$ in $Aff_{\C}$, the function $\Proj{\C}(A)\to \Proj{\C}(B)$ assigns to $L\in \Proj{\C}(A)$ the corresponding direct summand $B\X{A}L\mono B^{n+1}$.
\end{defi}
 Note that $B\X{A}L$ is a line object in $\mod_{\C}(B)$ since line objects are preserved by strong monoidal functors.
\begin{obs}
Note that for every $A\in Comm(\C)$ a pair $(L,{\bf x})$ in $\Proj{\C}(A)$ is a subobject, that is, a class of monomorphisms of $A^{n+1}$, where $(L_{1}{\bf x}_{1}),~(L_{2},{\bf x}_{2})$ represent the same element subobject, if there exists an isomorphism $\lambda:L_{1}\to L_{2}$ such that the diagram commutes
$$
\xymatrix@C=0.8ex{
L_{1}\ar_{\lambda}[rd]\ar@{^{(}->}^{{\bf x}_{1}}[rr] & & A^{n+1}\\
		& L_{2}\ar@{^{(}->}_{{\bf x}_{2}}[ru]
}
$$
Since $L$ is an invertible object we have that $Aut(L)\cong Aut(A)$, therefore the equivalence relation is given by scalar multiplication by invertible elements in $A$.  So if we think of the pair $(L,{\bf x})$ as a vector in $A^{n+1}$, its class in $\Proj{\C}(A)$ represents the ``line'' in $A^{n+1}$. This is kind of the intuition one has of the classical projective space.
\end{obs}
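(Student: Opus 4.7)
My reading is that the remark bundles three assertions into one statement: (i) any pair $(L,\mathbf{x}) \in \Proj{\C}(A)$ really is a subobject of $A^{n+1}$ in $\mod_{\C}(A)$, equipped with exactly the equivalence relation indicated by the commuting triangle; (ii) for every line object $L$ one has $Aut(L) \cong Aut(A)$; and (iii) combining these gives the familiar ``line up to rescaling'' interpretation. The plan is to treat them in that order.

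Claim (i) is essentially tautological from Definition \ref{def projectivo}: an element of $\Proj{\C}(A)$ is specified by a submodule $\mathbf{x}\colon L \mono A^{n+1}$, and two monomorphisms into the same target represent the same subobject precisely when each factors through the other---since both $\mathbf{x}_{1}$ and $\mathbf{x}_{2}$ are monic, the mediating morphism $\lambda$ is forced to be a unique isomorphism. This is exactly the commuting-triangle condition stated in the remark.

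For claim (ii) I would exploit the invertibility of $L$: the functor $L^{\vee} \tens{A} -\colon \mod_{\C}(A) \to \mod_{\C}(A)$ is an auto-equivalence with quasi-inverse $L \tens{A} -$, and hence induces a bijection $End_{A}(L) \cong End_{A}(L^{\vee}\tens{A} L) \cong End_{A}(A)$. The last object is identified with $V_{0}(A) = Hom_{\C}(\1,A)$ via the free-module adjunction $\C \leftrightarrows \mod_{\C}(A)$; restricting to automorphisms then produces the desired isomorphism $Aut_{A}(L) \cong Aut_{A}(A)$, i.e.\ the units of $A$. Claim (iii) follows: given an equivalence $(L,\mathbf{x}_{1}) \sim (L,\mathbf{x}_{2})$, the induced $\lambda \in Aut_{A}(L)$ transports across this bijection to a unit $u$ of $A$ whose associated endomorphism of $L$ rescales $\mathbf{x}_{2}$ to $\mathbf{x}_{1}$.

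The main subtlety, which I would flag carefully, is giving a precise meaning to ``scalar multiplication by a unit of $A$'' at the abstract level of modules in $\C$. What is really going on is that the chain of isomorphisms $End_{A}(L) \cong End_{A}(A) \cong V_{0}(A)$ transports the multiplicative monoid structure of $A$ onto $End_{A}(L)$; here it matters that $\1$ is a projective generator, so that $V_{0}$ faithfully records this multiplicative structure. Once that identification is in hand, the classical geometric intuition---a line in $A^{n+1}$ is a nonzero vector up to nonzero scalar---becomes an honest statement inside $\mod_{\C}(A)$.
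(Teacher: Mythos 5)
Your proposal is correct and follows essentially the same route as the paper: the remark carries no formal proof, and its one substantive claim, $Aut(L)\cong Aut(A)$, is justified there exactly as you do it --- via the auto-equivalence given by tensoring with the invertible object (the same device the paper uses to define the signature, $End_{\C}(\1)\cong End_{\C}(L)\cong End_{\C}(L\x L)$), combined with the tautological identification of elements of $\Proj{\C}(A)$ with subobjects of $A^{n+1}$. One minor note: the identification $End_{A}(A)\cong Hom_{\C}(\1,A)$ together with its multiplicative structure is pure free--forgetful adjunction and does not actually need $\1$ to be a projective generator, so your final caveat is harmless but unnecessary.
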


%let us consider the functor $\Proj{\C}:Comm(\C)\to Ens$
%\begin{align*}
%	\Proj{\C}(A)=&\{L\mono A^{n+1} \text{~ with L a line object in ~}\mod_{\C}(A)\}/\sim \\ \notag
%	 %\cong &\{q:A^{n+1}\epi L \text{~with L locally free rank 1 object in~}\mod_{\C}(A)\}/\equiv \notag
%\end{align*}
%and for every morphism $A\to B$ we associate the corresponding sub-$B$ module $B\X{A}L\mono B^{n+1}$.
%$$
%\xymatrix@-1.2pc{
%	%\Proj{\C}(A)\ar^{f^{*}}[r] & \Proj{\C}(B)  \\
%	 q:A^{n+1}\epi L \ar@{|->}^{}[rr] & &f^{*}(q):B^{n+1}\epi L\X{A}B
%}
%$$
% with ''inverse'' $L^{\vee}\X{A}B$ since the functor $-\X{A}B:\mod_{\C}(A)\to\mod_{\C}(B)$ is a tensor functor and symmtrivial objects are preserved by these functors

\begin{theorem}\label{teorema projectivo}
Let $\C$ be an abelian strong relative context. %  and reflexes epimorphisms un functor conservativo refleja los limites o colimites que preserva ver prop en apendice
Then the presheaf  $\Proj{\C}$ is a $\C$-scheme. 
\end{theorem}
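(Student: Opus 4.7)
The plan is to verify the three properties that make $\Proj{\C}$ into a $\C$-scheme: it is a sheaf for the fpqc topology, it admits a finite family of morphisms from affine schemes which are Zariski open immersions, and this family jointly covers $\Proj{\C}$ epimorphically.

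For the sheaf property, I would start with a finite faithfully flat covering $(A\to A_{i})_{i\in I}$ and a compatible family $(L_{i},{\bf x}_{i})_{i}\in \prod_{i}\Proj{\C}(A_{i})$. Since $\mod_{\C}(-)$ is a stack by Theorem \ref{M stack}, the descent datum $(L_{i},\theta_{ij})$ glues to a unique $A$-module $L$, and the ${\bf x}_{i}$ likewise glue to a morphism ${\bf x}:L\to A^{n+1}$. Setting $B=\prod_{i}A_{i}$, which is faithfully flat over $A$ by finiteness of $I$, one has $B\X{A}L\cong \prod_{i}L_{i}$, a line object in $\mod_{\C}(B)$ by Lemma \ref{product is line object}; Proposition \ref{prop line objects}(3) then implies $L$ is a line object in $\mod_{\C}(A)$. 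Moreover $B\X{A}L$ is a direct summand of $B^{n+1}$ as a finite product of direct summands, so Lemma \ref{lema 3.15 Demazure} yields that $L$ is itself a direct summand of $A^{n+1}$.

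For the affine cover, I would define for each $i=0,\dots,n$ the subfunctor
$$U_{i}(A)=\{(L,{\bf x})\in \Proj{\C}(A):\pi_{i}\circ{\bf x}:L\to A\text{ is an isomorphism}\},$$
with $\pi_{i}:A^{n+1}\to A$ the $i$-th projection. Any element of $U_{i}(A)$ admits a canonical representative with $L=A$ and $\pi_{i}\circ{\bf x}=1_{A}$, and the remaining $n$ components determine an element of $Hom_{\C}(\1^{n},A)\cong Hom_{Comm(\C)}(Sym(\1^{n}),A)$ via the free-forgetful adjunction; hence $U_{i}\cong Spec(Sym(\1^{n}))$ is representable by an affine scheme. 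To see that $U_{i}\hookrightarrow \Proj{\C}$ is a Zariski open immersion, I would take any $Spec(B)\to\Proj{\C}$ classifying $(L,{\bf x})\in\Proj{\C}(B)$ and consider the ideal $I_{i}=Im(\pi_{i}\circ{\bf x}:L\to B)$. Invertibility of $L$ together with Proposition \ref{prop line objects}(4), applied after tensoring with $L^{\vee}$, implies that $\pi_{i}\circ{\bf x}$ becomes an isomorphism after base change to $B'$ if and only if it becomes an epimorphism if and only if $B'I_{i}=B'$; thus $U_{i}\times_{\Proj{\C}}Spec(B)\cong U_{I_{i}}$, a Zariski open of $Spec(B)$ by Lemma \ref{complementary open}.

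For the covering property, I would apply Lemma \ref{epi of sheaves} to the finite family $\{U_{i}\to\Proj{\C}\}_{i=0}^{n}$, reducing the problem to surjectivity on field objects. Given a field object $K$ and $(L,{\bf x})\in\Proj{\C}(K)$, the monomorphism ${\bf x}:L\to K^{n+1}$ is nonzero since $L$ is a line object, so at least one coordinate $\pi_{i}\circ{\bf x}:L\to K$ is nonzero; its image is a nonzero ideal of $K$ and therefore equals $K$. Hence $\pi_{i}\circ{\bf x}$ is an epimorphism, and the same tensor-with-$L^{\vee}$ trick combined with Proposition \ref{prop line objects}(4) upgrades it to an isomorphism, so $(L,{\bf x})\in U_{i}(K)$. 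The main obstacle I expect is the sheaf step, where one must simultaneously descend the line-object structure and the direct-summand property along an fpqc cover; this is precisely where the full strength of $\C$ being an abelian strong relative context (projectivity and finite presentability of $\1$, bicompleteness) is needed, in order to license Lemmas \ref{lema 4.6}, \ref{lema 3.15 Demazure} and Proposition \ref{prop line objects}(3).
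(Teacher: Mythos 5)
Your proposal is correct and takes essentially the same route as the paper: the stack property for descent of $(L_{i},\theta_{ij})$ combined with Lemma \ref{product is line object}, Proposition \ref{prop line objects}(3) and Lemma \ref{lema 3.15 Demazure} for the sheaf condition, representability of the $U_{i}$ by affine $n$-space, identification of the pullback $U_{i}\times_{\Proj{\C}}Spec(B)$ with the complementary open $U_{I_{i}}$ of Lemma \ref{complementary open} via the tensor-with-$L^{\vee}$ argument and Proposition \ref{prop line objects}(4), and the field-object criterion of Lemma \ref{epi of sheaves} for the covering. The only step you elide is the cocycle condition for the descent datum, which the paper verifies in one line by observing that between two subobjects of $A_{ijk}^{n+1}$ there is at most one morphism.
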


\begin{proof}
Let us check the sheaf condition in the Zariski topology: Let $\{SpecA_{i}\to SpecA\}_{i}$ be a Zariski covering, we have to prove the exactness of the sequence 
\begin{equation}\label{condicion de haz}
\xymatrix{
\Proj{\C}(A)\ar^{}[r] & \prod_{i}\Proj{\C}(A_{i})\ar@<0.5ex>[r]^{} \ar@<-0.5ex>[r]_{}& \prod_{i,j}\Proj{\C}(A_{ij}).
}
\end{equation}
Let $L\in \Proj{\C}(A)$, by the equivalence given in  (\ref{M stack}) %and Corollary \ref{subcanonical} 
the following sequence is exact
$$
\xymatrix{
L\ar^{}[r] & \prod_{i}L_{i}\ar@<0.5ex>[r]^{} \ar@<-0.5ex>[r]_{}& \prod_{i,j}L_{ij}
%A^{n+1} \ar^{}[r]          &  \prod_{i}A_{f_{i}}^{n+1}\ar@<0.5ex>[r]^{} \ar@<-0.5ex>[r]_{}   & \prod_{i,j}A_{f_{i}f_{j}}^{n+1}
}
$$ 
 %(LA EXACTITUDE TAMBIEN PROVIENE Y ES MEJOR ARGUMENTO, DE COROLARIO 2.11 DE TV QUE DICE QUE LA TOPOLOGIA FPQC ES SUBCANONICA Y SATISFACE DESCENSO PARA LOS MODULOS)) \\
 
%it means that  $P=\lim\left(\xymatrix{ \prod_{i}P_{f_{i}}\ar@<0.5ex>[r]^{} \ar@<-0.5ex>[r]_{}& \prod_{i,j}P_{f_{i}f_{j}}}\right)$
then $L$ is determined by $L_{i}\in \Proj{\C}(A_{i})$ therefore $\Proj{\C}(A)$ is a cone of the diagram.   %in (\ref{condicion de haz})
 \\
Now we have to check that $\Proj{\C}(A)$ is universal. To see this, consider the compatible family $(L_{i})_{i}\in\prod_{i}\Proj{\C}(A_{i})$. The compatibility says that we have a family of isomorphisms
$$
\xymatrix{
L_{i}\X{A}A_{j}\ar^{\cong}_{\theta_{ij}}[r]\ar@{^{(}->}[rd] & L_{j}\X{A}A_{i}\ar@{^{(}->}[d]\\
   & A_{ij}^{n+1},
}
$$
let us prove that $(L_{i},\theta_{i,j})_{i,j}$ is a descent data, that is, $\theta_{ij}$ satisfies the cocycle condition $\theta_{j,k}\circ\theta_{i,j}=\theta_{i,k}$ in $\mod(A_{i,j,k})$. In fact, the following diagram of sub-objects of $A_{i,j,k}^{n+1}$
$$
\xymatrix{
L_{i}\X{A}A_{j}\X{A}A_{k}\ar@/^{2.5pc}/[rr]^{\theta_{i,k}\x A_{j}}\ar^{\theta_{i,j}\x A_{k}}[r] & L_{j}\X{A}A_{i}\X{A}A_{k}\ar^{\theta_{j,k}\x A_{i}}[r]& L_{k}\X{A}A_{i}\X{A}A_{j}
}
$$
says that the two arrows coincide since between two subobjects there is at most one arrow. Again by the equivalence given in (\ref{M stack}), we have that the descent data $(L_{i},\theta_{i,j})_{i,j}$ defines an $A$-module $L$ as the limit of the diagram
$$
\xymatrix@R=0.5pc{
   &   L_{i}\ar^{}[r] & L_{i}\x A_{j}\ar^{\theta_{i,j}}[dd] \\
L \ar^{}[ru]\ar^{}[rd]   &     &\\
     & L_{j}\ar^{}[r]    &  L_{j}\x A_{i} .
}
$$
%  Let  $J$ to be the $B$-module  $\prod_{i}L_{i}$ with $B=\prod_{i}A_{f_{i}}$. By abuse of notation we denote $p(J)=q(J)$ meaning that the inclusions $in_{1}, in_{2}$ coincide, i.e.,  the  following diagram commutes
%$$
%\xymatrix{
%J\ar@{^{(}->}[r]^{} & \prod_{i}A_{f_{i}}^{n+1}\ar@<0.5ex>[r]^{in_{1}} \ar@<-0.5ex>[r]_{in_{2}} & \prod_{i,j}A_{f_{i}f_{j}}^{n+1}
%}
%$$ 
%we denote this submodule $K=in_{i}(J)\in \mod_{}(B\X{A}B)$. It follows that if $in_{i}$ denotes the $i$-th inclusion of $B$ in $B\X{A}B\X{A}B$ ($i=1,2,3$), the $B\X{A}B\X{A}B$-submodule $M$ of $(B\X{A}B\X{A}B)^{n+1}$ generated by $in_{i}(J)$  is independent of $i$. 
%$$
%\xymatrix{
%\quad J\quad \ar@{^{(}->}^{}[d] \ar@<0.5ex>[r]^{} \ar@<-0.5ex>[r]_{} & \quad K\quad  \ar@{^{(}->}^{}[d]\ar@<1.0ex>[r]^{}\ar^{}[r] \ar@<-1.0ex>[r]_{}&\quad M\quad\ar@{^{(}->}^{}[d]\\
%B^{n+1}  \ar@<0.5ex>[r]^{} \ar@<-0.5ex>[r]_{}              &  (B\X{A}B)^{n+1}   \ar@<1.0ex>[r]^{}\ar^{}[r] \ar@<-1.0ex>[r]_{}&  (B\X{A}B\X{A}B)^{n+1}
%}
%$$

%By lemma \ref{lema3.14 Demazure} the $A$-module $L$ 
%$$L=lim(\xymatrix{J\ar@<0.5ex>[r]^{} \ar@<-0.5ex>[r]_{}& K\ar@<1.0ex>[r]^{}\ar^{}[r] \ar@<-1.0ex>[r]_{}&M})$$
To prove that $L\in\Proj{\C}(A)$, consider the product algebra $B=\prod_{i}A_{i}$, note that $B$ is a faithfully flat $A$-algebra as $\mod_{\C}(B)\cong \prod_{i}\mod_{\C}(A_{i})$ and the functor $-\X{A}B$ is naturally isomorphic to $\bigtimes_{i}(-\X{A}A_{i})$.  Now take the $B$-module $L\x B$, then we have that
$$
L\x B\cong\prod_{i}L\x A_{i}\cong\prod_{i}L_{i}.
$$
By Lemma \ref{product is line object}, $L\x B$ is a line object in $\mod_{}(B)$ therefore by proposition \ref{prop line objects} %{PROP 4.10.3 Y 4.10.5 DE BRANDEN} 
we have that $L$ is a line object in $\mod_{}(A)$. Finally by Lemma \ref{lema 3.15 Demazure}, $L$ is a direct summand of $A^{n+1}$.\\

$\Proj{\C}$ is covered by the affine open sub-functors $U_{i}$ for $i=1,\cdots n+1$ 
\begin{equation}\label{abiertos afines}
U_{i}(A)=\{L\in\mod_{\C}(A):\xymatrix@-1pc {L\ar@{^{(}->}^{{\bf{x}}\ \ }[r] & A^{n+1}\ar^{\pi_{i}}[r] & A}\ \ \text{$\pi_{i}\circ{\bf{x}} ~$is an isomorphism}\}.
\end{equation}
 %We have to prove that this subfunctors are affine schemes, in fact they are isomorphic to the functor $Hom(\1[x_{1},\cdots x_{n},-]$ which is the affine $n$-space.
\textbf{Representability of the subfunctors $U_{i}$:} let us fix the index $i$. Given any element $(L,\mathbf x)\in U_{i}(A)$, we identify $L$ with  $A$ as submodules of $A^{n+1}$ via the isomorphism $\pi_{i}\mathbf x:L\cong A$, then we obtain $\tilde{\mathbf x}=\mathbf x(\pi_{i}\mathbf x)^{-1}:A\to A^{n+1}$, this means that $(L,\mathbf x)=(A,\mathbf{\tilde{x}})$ as subobjects of $A^{n+1}$. Since $\pi_{i}\tilde {\mathbf x}=1$, $\tilde{ \mathbf x}$ is completely determined by specifying the morphisms $\pi_{j}\tilde{\mathbf x}:A\to A$ for $j=1,\cdots n+1$ and $j\neq i$, i.e., the functor $U_{i}$ is isomorphic to the functor 
\begin{align*}
	A\mapsto \prod_{\substack{j=1\\ j\neq i}}^{n+1}\Hom{A}{A}{A}&\cong \prod_{j=1}^{n}\Hom{\1_{\C}}{A}{\C}\\
	& \cong\Hom{\1^{n}}{A}{\C}\cong\Hom{\1[x_{1},\cdots x_{n}]}	{A}{Comm(\C)}=\mathbb A^{n}_{\C}(A),
\end{align*}
therefore $U_{i}$ is representable by an affine scheme.
%where in the first isomorphism we used that the functor $M\mapsto M\x A$ from $\C$ to $\mod_{\C}(A)$ is left adjoint to the forgetful functor. See proposition \ref{I-adjuncion de A-mod y olvido}.\\

\textbf{The sub functors $U_{i}$ are Zariski open immersions:} let us see that for affine scheme $h_{A}$ and any morphism $h_{A}\to \Proj{\C}$,  the pullback $h_{A}\times_{\Proj{\C}}U_{i}$ is a Zariski immersion of $h_{A}$.\\
By Yoneda's Lemma the morphism $h_{A}\to \Proj{\C}$ corresponds to $(L,\mathbf x)$ in $\Proj{\C}(A)$. Consider the pullback 
$$
\xymatrix@-0.7pc{
	V_{i}=h_{A}\times_{\Proj{\C}}U_{i} \ar^{}[r] \ar^{}[d]& U_{i}\ar@{>->}^{}[d]\\
	h_{A}\ar^{}[r]   & \Proj{\C} .
}
$$ 
%Since $U_{i}$ is representable we have that 
%$$
%V_{i}(B)=\{(f,L)~|~ f:A\to B,~ L\in U_{i}(B)~\text{and }f^{*}(L^{\vee})=L\}
%$$
%that is to say 
Now, an element in $V_{i}(B)$ is the same as a morphism $f:A\to B\in Comm(\C)$ such that $\xymatrix@-1.1pc{B\X{A}L\ar^{}[r] & B^{n+1}\ar^{}[r]& B}$ is an isomorphism. If $I_{i}$ denotes the ideal in $A$ defined by the image of $\pi_{i}\circ\mathbf x$,  then tensoring the factorization of this arrow with $B$, we get a diagram 
\begin{figure}[H]
\begin{center}
\begin{tabular}{cc}
$
\xymatrix{L\ar[rr]^{\pi_{i}\circ\mathbf x}\ar@{->>}[rd]_{p} &  &A\\
  &  I_{i}\ar@{^{(}->}[ru]_{j}
}
$
&
$
\xymatrix@-0.8pc{
B\X{A}L\ar^{B\x(\pi_{i}\circ \mathbf x)}[rr]\ar_{B\x p}[rd] & & B\X{A}A\cong B.\\
                       & B\X{A}I_{i}\ar_{m_{B}(B\x f\circ j)}[ru]  &
}
$

\end{tabular}
\end{center}
\end{figure}
We have that all the arrows in the triangle on the right are isomorphisms. On the other hand, consider the ideal $BI_{j}$, which by definition is the image of $m_{B}(B\x f\circ j)$ then we have 
%$$
%\xymatrix@-0.8pc{
%B\X{A} I_{j}\ar^{\ \ \ m_{B}(B\x f\circ j)}[rr]\ar@{->>}[rd]_{} &  & B\\
%  & BI_{j}\ar@{^{(}->}[ru]^{}    &
%}
%$$
$B\X{A}I_{i}\cong BI_{i}$. This means that $V_{i}$ is contained in the complementary open subscheme associated to the ideal $I_{i}$. Let us see that the complementary open $U_{I_{i}}$ defined by the ideal $I_{i}$ is contained in $V_{i}$ . Let $f\in U_{I_{i}}(B)$, i.e., $f:A\to B$ satisfies that the induced ideal $BI_{i}$ is isomorphic with $B$. Then we have that, by the triangle in the right, $B\X{A}L\to B$ is an epimorphism and $B\X{A}L$ is a line object in $\mod(B)$, so tensoring this epimorphism with the inverse of $B\X{A}L$, we have again an epi $B\epi B\X{A}L^{\vee}$ which by Proposition \ref{prop line objects}(4) is an isomorphism in $\mod(B)$, then tensoring again with the inverse we get $\xymatrix{B\X{A}L\ar^{\quad\cong}[r]& B}$. This means that $f\in V_{i}(B)$.  Finally by Lemma \ref{complementary open}, $V_{i}\subset Spec A$ is a Zariski open, so is $U_{i}\subset \Proj{\C}$.\\

\textbf{The family $(U_{i})_{i}$ is an affine Zariski open covering}: We have to prove that 
$$
\coprod_{i}U_{i}\to \Proj{\C}
$$
 is an epimorphism of sheaves. By lemma \ref{epi of sheaves} is enough to prove that $\coprod_{i}U_{i}(\mathbb K)\to \Proj{\C}(\mathbb K)$ is surjective for every field $\mathbb K\in Comm(\C)$.

Let $L\in \Proj{\C}(\mathbb K)$, i.e., ${\bf x}:L\mono \mathbb K^{n+1}$, then there exists an index $j$ such that the arrow $\pi_{j}\circ {\bf x}:L\to \mathbb K$ is non zero but then the image ideal $I_{j}$ in  $\mathbb K$  must be exactly $\mathbb K$ thus we have an epi $L\epi \mathbb K$. Since $L$ is a line object we have $L\X{\mathbb K}L^{\vee}\cong \mathbb K$  therefore $\mathbb K\epi L^{\vee}$, thus by Proposition \ref{prop line objects}(4),  $\mathbb K\cong L^{\vee}$ so $\mathbb K\cong L$.
\end{proof}

%\begin{ej}
%If $\C=\mod(\ent)$ then $\Proj{\C}=\Proj{\mbox{\tiny{$\ent$}}}$
%\end{ej}

\begin{defi}
 $\M$ is a symmetric monoidal category, $A\in Comm(\M)$ and $\C=\mod_{\M}(A)$ then we define $\Proj{A}:=\Proj{\C}$.
\end{defi}

%\subsection*{\Large{Alternative Definition for the Projective Space}}
Now we give another definition of the relative projective space in terms of quotients instead of submodules. This definition is somehow dual to the one given in definition \ref{def projectivo} and we show that these two definitions are in fact equivalent. 
\begin{defi}\label{projectivo con epis}
Let $n\geq 1$ a fixed integer. For every affine scheme $Spec(A)$ we define $\overline{\Proj{\C}}(A)$ to be the set of quotients $L$ of $A^{n+1}$ with $L$ a line object in $\mod_{\C}(A)$. 
For every morphism $Spec(B)\to Spec(A)$, the function $\overline{\Proj{\C}}(A)\to \overline{\Proj{\C}}(B)$ assigns to $L\in\overline{\Proj{\C}}(A)$ the corresponding epimorphism $B^{n+1}\epi B\X{A}L$.
\end{defi}
As before, $B\X{A}L$ is a line object in $\mod_{\C}(B)$ since line objects are preserved by strong monoidal functors.
\begin{theorem}\label{teorema projectivo2}
If $\C$ is an abelian strong relative context then $\overline{\Proj{\C}}$ is a $\C$-scheme.
\end{theorem}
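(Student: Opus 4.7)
My approach is to exhibit a natural isomorphism of presheaves $\Proj{\C}\cong\overline{\Proj{\C}}$, at which point the conclusion is immediate from Theorem~\ref{teorema projectivo}. The underlying idea is that $A^{n+1}$ is canonically self-dual in $\mod_{\C}(A)$, so applying $(-)^{\vee}$ interchanges a line subobject $L\mono A^{n+1}$ that is a direct summand with a line quotient $A^{n+1}\epi L^{\vee}$. Since line objects are stable under duality (their inverses are again invertible with trivial signature, cf.\ Remark~\ref{invertible=proyectivo}), this operation really does land in the prescribed sets.

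In one direction, for a class $(L,\mathbf{x})\in\Proj{\C}(A)$ I would send it to the dual epimorphism $\mathbf{x}^{\vee}\colon A^{n+1}\cong (A^{n+1})^{\vee}\epi L^{\vee}$, which is well defined on subobject classes: if $\lambda\colon L_{1}\to L_{2}$ realizes $\mathbf{x}_{2}\lambda=\mathbf{x}_{1}$, then $\lambda^{\vee}$ realizes the corresponding equivalence of quotients. In the other direction, for $p\colon A^{n+1}\epi L$ in $\overline{\Proj{\C}}(A)$ I would use that $L$ is projective in $\mod_{\C}(A)$ by Remark~\ref{invertible=proyectivo}(3), applied to $\mod_{\C}(A)$, whose unit $A$ is projective because $\C$ is an abelian strong relative context. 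Hence $p$ splits via some $s\colon L\to A^{n+1}$; dualizing, $p^{\vee}\colon L^{\vee}\mono A^{n+1}$ admits $s^{\vee}$ as a retraction, so $L^{\vee}$ is a direct summand of $A^{n+1}$, giving an element of $\Proj{\C}(A)$. Call these natural transformations $\Phi$ and $\Psi$.

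The final step is to check that $\Phi\Psi$ and $\Psi\Phi$ are the identity, which uses biduality $L^{\vee\vee}\cong L$ and the fact that the dual of the canonical self-duality on $A^{n+1}$ is itself, and to check naturality in $A$, which follows because for any $A\to B$ in $Comm(\C)$ the base change $-\X{A}B$ is strong monoidal, hence preserves duals, epimorphisms, and split monomorphisms. The main technical nuisance I anticipate is purely bookkeeping: verifying that the class of $\mathbf{x}^{\vee}$ in $\overline{\Proj{\C}}(A)$ depends only on the subobject class of $\mathbf{x}$ and is compatible with base change. Once this is settled, the sheaf condition and the affine Zariski cover by the subfunctors $U_{i}$ transfer from $\Proj{\C}$ to $\overline{\Proj{\C}}$ under $\Phi$, so $\overline{\Proj{\C}}$ is a $\C$-scheme.
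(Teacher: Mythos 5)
Your proposal is correct, but it takes a genuinely different route from the paper. The paper proves Theorem~\ref{teorema projectivo2} \emph{directly}, re-running for $\overline{\Proj{\C}}$ the same verification it did for $\Proj{\C}$: the sheaf condition via descent of the quotient data $A_{i}^{n+1}\epi L_{i}$ (where epimorphy of $A^{n+1}\to L$ is checked after the jointly conservative base changes $A_{i}\X{A}-$), representability of the charts $\overline{U_{i}}$ via an explicit bijection $\overline{U_{i}}\cong U_{i}$, identification of each pullback $\overline{V_{i}}$ with the complementary open subscheme attached to the image ideal of $\lambda_{i}^{\vee}\mathbf{x}^{\vee}$ (Lemma~\ref{complementary open}), and the covering criterion on field objects (Lemma~\ref{epi of sheaves}); the comparison with $\Proj{\C}$ is deferred to Theorem~\ref{los projectivos son isomorfos}, where it is established only as a monomorphism plus a sheaf epimorphism, the epimorphism part itself using both affine covers. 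You instead front-load a \emph{pointwise} presheaf isomorphism $\Proj{\C}\cong\overline{\Proj{\C}}$ and transfer schemeness from Theorem~\ref{teorema projectivo}, and your key observations do hold: in a strong relative context $A$ is projective in $\mod_{\C}(A)$, so a line quotient $A^{n+1}\epi L$ splits automatically (the quotient definition secretly includes the splitting), dualization on dualizable objects exchanges split epimorphisms and split monomorphisms, $(A^{n+1})^{\vee}\cong A^{n+1}$ compatibly with biduality, line objects are closed under $(-)^{\vee}$ (this is not literally in Remark~\ref{invertible=proyectivo} as you cite it, but follows from multiplicativity of the signature, $s(L)\,s(L^{\vee})=s(\1)=1$, or from \cite{brandenburg}), and strong monoidality of $B\X{A}-$ gives naturality. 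The trade-off is worth noting: your route is shorter and in fact proves something \emph{stronger} than Theorem~\ref{los projectivos son isomorfos} (a pointwise bijection of presheaves rather than mono plus sheaf epi), but it makes Theorem~\ref{teorema projectivo2} depend on the full strength of Theorem~\ref{teorema projectivo}, in particular on Lemmas~\ref{lema 4.6} and~\ref{lema 3.15 Demazure} (faithfully flat descent of direct summands via the base-change map $\zeta$ on internal homs), a dependence the paper explicitly avoids in its direct proof; conversely, the paper's longer argument produces the explicit charts $\overline{U_{i}}$ and the ideal-theoretic description of the opens, which it then reuses in the proof of Theorem~\ref{los projectivos son isomorfos}.
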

The proof of this theorem is quite similar to its analogous result \ref{teorema projectivo}, however by the very definition we will not need Lemmas \ref{lema 4.6} and \ref{lema 3.15 Demazure}.
\begin{proof}
The sheaf condition is proven similarly as we did for $\Proj{\C}$. Let $(A\to A_{i})_{i\in I}$ be a Zariski covering for $Spec A$, we have to check the exactness of the diagram 
$$
\xymatrix{
\overline{\Proj{\C}}(A)\ar^{}[r] & \prod_{i}\overline{\Proj{\C}}(A_{i})\ar@<0.5ex>[r]^{} \ar@<-0.5ex>[r]_{}& \prod_{i,j}\overline{\Proj{\C}}(A_{ij}).
}
$$
We proceed as before to show that $\overline{\Proj{\C}}(A)$ is a cone for the diagram. To show that is universal consider the compatible family $(L_{i})_{i}\in\prod_{i}\overline{\Proj{\C}}(A_{i})$. The compatibility says that we have a family of isomorphisms $\theta_{ij}$ making the diagram commute
$$
\xymatrix{
 & A_{ij}^{n+1}\ar@{->>}[ld]\ar@{->>}[d]\\
L_{i}\X{A}A_{j}\ar^{\cong}_{\theta_{ij}}[r] & L_{j}\X{A}A_{i},
}
$$
$(L_{i},\theta_{i,j})_{i,j}$ is a descent data, that is, $\theta_{ij}$ satisfies the cocycle condition $\theta_{j,k}\circ\theta_{i,j}=\theta_{i,k}$ in $\mod(A_{i,j,k})$. 
%In fact, the following diagram of sub-objects of $A_{i,j,k}^{n+1}$
%$$
%\xymatrix{
%L_{i}\X{A}A_{j}\X{A}A_{k}\ar@/^{2.5pc}/[rr]^{\theta_{i,k}\x A_{j}}\ar^{\theta_{i,j}\x A_{k}}[r] & L_{j}\X{A}A_{i}\X{A}A_{k}\ar^{\theta_{j,k}\x A_{i}}[r]& L_{k}\X{A}A_{i}\X{A}A_{j}
%}
%$$
%says that the two arrows coincide since between two subobjects there is at most one arrow. 
Thus by the equivalence given in (\ref{M stack}), we have that the descent data $(L_{i},\theta_{i,j})_{i,j}$ defines an $A$-module $L$ as the limit of the diagram
$$
\xymatrix@R=0.5pc{
   &   L_{i}\ar^{}[r] & L_{i}\x A_{j}\ar^{\theta_{i,j}}[dd] \\
L \ar^{}[ru]\ar^{}[rd]   &     &\\
     & L_{j}\ar^{}[r]    &  L_{j}\x A_{i} .
}
$$
This $L$ is a line object by propositions \ref{prop line objects}, \ref{product is line object}. Finally to see that $A^{n+1}\to L$ is an epimorphism we use the fact that for every $i\in I$, we have the family of epimorphisms $A_{i}^{n+1}\epi L_{i}\cong A_{i}\X{A}L$, since the family of functors $A_{i}\X{A}-$ is jointly conservative we get the result.\\
We now prove that $\overline{\Proj{\C}}$ has an affine Zariski open covering. For this, we define for $i=0,\dots n$
$$
\overline{U_{i}}(A)=\{(L,{\bf x}),\ \ \text{such that the composition }\xymatrix@C=3ex{A\ar^{\lambda_{i}}[r]&A^{n+1}\ar@{->>}^{{\bf x}}[r] & L}\text{ is an isomorphism} \}
$$
the isomorphism ${\bf x}\lambda_{i}$ occurs in $\mod_{\C}(A)$. We will check the representability of these functors by showing that $U_{i}\cong \overline{U_{i}}$ for $i=0,\dots n$.\\
In fact, for every affine scheme $Spec A$, we will define a bijection $U_{i}(A)\longleftrightarrow\overline{U_{i}}(A)$. First let us make a simplification: if $(L,{\bf x})$ belongs to $U_{i}(A)$ we can make the identification $L\cong A$ as subobjects of $A^{n+1}$, we will denote the pair $(A,{\bf x})$ in $U_{i}(A)$. The same goes for a pair $(L,{\bf y})$ in $\overline{U_{i}}(A)$. let us fix the index $i$:
$$
\xymatrix@R=2ex{
U_{i}(A)\ar^{\varphi}[r] & \overline{U_{i}}(A)\\
(A,{\bf x})\ar@{|->}[r] 	&  (A,{\bf y})
}
$$ 
with ${\bf y}$ defined by the following: for every $j=0,\dots n$, the diagram commutes
$$
\xymatrix{
A\ar^{\pi_{j}{\bf x}}[rd] \ar^{\lambda_{j}}[d]& \\
A^{n+1}\ar@{-->}[r]_{\exists! {\bf y}} & A
}
$$
since ${\bf y}\lambda_{i}=\pi_{i}{\bf x}$ is an isomorphism we have that ${\bf y}$ is an epimorphism, even more that $(A,{\bf y})$ is in $\overline{U_{i}}(A)$.\\
For the arrow in the other direction:
\begin{equation}\label{iso de abiertos}
\xymatrix@R=2.5ex{
\overline{U_{i}}(A)\ar^{\psi}[r] & U_{i}(A)\\
(A,{\bf y})\ar@{|->}[r] 	&  (A,{\bf x})
}
\end{equation}
with ${\bf x}$ defined analogously by the following diagram for every $j=0,\dots n$:
$$
\xymatrix{
& A \\
A\ar^{{\bf x}\lambda_{j}}[ru]\ar@{-->}_{{\bf x}}[r]& A^{n+1},\ar[u]_{\pi_{j}}
}
$$
as $\pi_{i}{\bf x}={\bf y}\lambda_{i}$ is an isomorphisms it says that $(A,{\bf x})$ is in $U_{i}(A)$.\\
We will check that $\psi\varphi=1$ the other one is similar. 
$$
\psi\varphi(A,{\bf x})=\psi(A,{\bf y})=(A,\tilde{\bf x})
$$
with $\pi_{j}\tilde{\bf x}={\bf y}\lambda_{j}=\pi_{j}{\bf x}$ for all $j=0,\dots n$, then $\tilde{\bf x}={\bf x}$.\\
The next step is to prove that every $\overline{U_{i}}$ is a Zariski open immersion of $\overline{\Proj{\C}}$. Again, we will show that for any affine scheme $h_{A}$ and morphism $h_{A}\to \overline{\Proj{\C}}$, the pullback 
$$
\xymatrix{\overline{V_{i}}\ar^{}[d]\ar[r]& \overline{U_{i}}\ar[d]\\
h_{A}\ar^{}[r] &  {\overline{\Proj{\C}}}
}
$$
is a Zariski open in $h_{A}$. We proceed as we did before, that is, we show that the subfunctor $V_{i}$ is equivalent to the complementary open subscheme of $h_{A}$ associated to an ideal $I$ of $A$. For $B\in Comm(\C)$, $\overline{V_{i}}(B)$ consists of morphisms $f:A\to B$ in $Comm(\C)$ such that the if $(L,{\bf x})$ is in $\overline{U_{i}}(A)$, the induced morphism
$$
B\X{A}A\to B\X{A}A^{n+1}\to B\X{A}L
$$
is an isomorphism. Take the dual morphism (as they are dualizable objects in $\mod_{\C}(A)$) of the composition $\xymatrix{A\ar^{\lambda_{i}}[r]&A^{n+1}\ar@{->>}[r]^{{\bf x}}& L}$ and take its image ideal $I$, as we see in the factorization diagram
$$
\xymatrix@R=3ex{
L\ar^{{\bf x}^{\vee}\quad}[r]\ar@{->>}[rd] &   A^{n+1}\ar^{\lambda_{i}^{\vee}}[r]&A\\
	& I\quad\ar@{>->}_{j}[ru] &
}
$$ 
applying the functor $B\X{A}-$ we obtain the diagram
$$
\xymatrix@R=3ex{
B\X{A}L\ar^{{1\x\bf x}^{\vee}\quad}[r]\ar@{->>}[rd] &   B\X{A}A^{n+1}\ar^{1\x \lambda_{i}^{\vee}}[r]&B\X{A}A\\
	& B\X{A}I\quad\ar_{1\x j}[ru] &
}
$$
the morphism in the top of the triangle is an isomorphism as it is the dual of an isomorphism, then the epimorphism $B\X{A}L\epi B\X{A}I$ is a monomorphism, therefore all arrows in the triangle are isomorphisms, this means that $\overline{V_{i}}(B)\subset U_{I}$ where $U_{I}$ denotes the complementary open subscheme associated to the ideal $I$. The other inclusion is obtained similarly.\\
Finally to show that the family $(\overline{U_{i}})_{i=0,\dots n}$ is a covering we will prove that for every field $\mathbb K\in Comm(\C)$ we have a surjection $\coprod_{i}\overline{U_{i}}(\mathbb K)\to \overline{\Proj{\C}}(\mathbb K)$. In fact, let $(L,{\bf x})$ in $\overline{\Proj{\C}}(\mathbb K)$, then there exists an index $j$ such that ${\bf x}\lambda_{j}:\mathbb K\to L$ is the non zero arrow, then taking its dual morphism 
$$
\xymatrix@R=2ex{L^{\vee}\ar^{({{\bf x}\lambda_{i}})^{\vee}}[r] & \mathbb K
}
$$
we have that this morphism must be an epimorphism since its image is an ideal in $\mathbb K$ and $\mathbb K$ is simple. After we tensor this epi with $L$ we get an epimorphism $\mathbb K\epi L$ which by Corollary \ref{epi a un line object es iso} is an isomorphism. 
\end{proof}
\begin{theorem}\label{los projectivos son isomorfos}
$\Proj{\C}$  and  $\overline{\Proj{\C}}$ are isomorphic as $\C$-schemes.
\end{theorem}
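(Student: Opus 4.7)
The plan is to construct the isomorphism via duality of line objects, leveraging the fact that line objects are dualizable with $L^{\vee\vee} \cong L$, together with the self-duality $A^{n+1} \cong (A^{n+1})^{\vee}$ in $\mod_{\C}(A)$. Concretely, I would define a natural transformation $\Phi: \Proj{\C} \Rightarrow \overline{\Proj{\C}}$ by sending a pair $(L, \mathbf{x})$ with $\mathbf{x}: L \mono A^{n+1}$ a split mono to its dual $\mathbf{x}^{\vee}: A^{n+1} \cong (A^{n+1})^{\vee} \twoheadrightarrow L^{\vee}$, which is a split epi onto the line object $L^{\vee}$. In the other direction, $\Psi: \overline{\Proj{\C}} \Rightarrow \Proj{\C}$ sends an epi $\mathbf{y}: A^{n+1} \epi L$ to its dual $\mathbf{y}^{\vee}: L^{\vee} \mono A^{n+1}$; here I use Remark \ref{invertible=proyectivo}(3), which asserts that $L$ is projective (since $\1$ is), so that $\mathbf{y}$ admits a splitting and hence $\mathbf{y}^{\vee}$ is a split mono with line object source.

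Next I would verify that $\Phi$ and $\Psi$ are mutually inverse. This reduces to the canonical double-duality isomorphism $L^{\vee\vee} \cong L$ for dualizable objects together with the naturality of the evaluation/coevaluation pairings, which exchange $\mathbf{x}$ and $(\mathbf{x}^{\vee})^{\vee}$ up to the canonical identifications, and similarly for $\mathbf{y}$. Since $\Proj{\C}(A)$ and $\overline{\Proj{\C}}(A)$ are defined up to the natural equivalences identifying isomorphic subobjects (respectively, quotient objects), the verification is essentially formal once these canonical isomorphisms are in place.

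The remaining point is naturality in $A$: for a morphism $A \to B$ in $Comm(\C)$ one needs a coherent identification $B \X{A} L^{\vee} \cong (B \X{A} L)^{\vee}$, so that $\Phi$ and $\Psi$ commute with the base-change maps. Because $L$ is finitely presentable (in fact dualizable) and $A\to B$ is a morphism of commutative algebras in $\C$, this is a special case of Lemma \ref{lema 4.6} (taking $M = L$, $N = A$, and using $\hom{A}{L}{A} \cong L^{\vee}$), which guarantees that the canonical comparison $\zeta$ is an isomorphism. Equivalently, and more directly, this follows from the fact that strong monoidal functors preserve duals.

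I expect the main subtlety to be bookkeeping rather than substance: carefully identifying $A^{n+1}$ with its dual in a way compatible with the coordinate projections $\pi_i$ and sections $\lambda_i$ used elsewhere, and checking that under $\Phi$ the affine chart $U_i \subset \Proj{\C}$ maps isomorphically onto $\overline{U_i} \subset \overline{\Proj{\C}}$ in agreement with the explicit chart-level isomorphism already written down in (\ref{iso de abiertos}). Once this compatibility is recorded, the Zariski local description together with the sheaf axiom gives immediately that $\Phi$ is an isomorphism of $\C$-schemes, without any further global argument being needed.
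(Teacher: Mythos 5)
Your proposal is correct, but it is organized differently from the paper's proof. Both arguments hinge on the same map: the paper defines exactly your $\Psi$, sending an epimorphism ${\bf x}:A^{n+1}\epi L$ to the split monomorphism ${\bf x}^{\vee}:L^{\vee}\mono A^{n+1}$ (using, as you do, that $L$ is projective by Remark \ref{invertible=proyectivo}(3) to dualize a section into a retraction). Where you diverge is in how the map is shown to be an isomorphism. You build an explicit two-sided inverse $\Phi$ and verify $\Phi\Psi=1$, $\Psi\Phi=1$ via the canonical double-duality $L^{\vee\vee}\cong L$, which settles the isomorphism levelwise, at the presheaf level, with no appeal to the topology. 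The paper instead only checks that $\Psi$ is levelwise injective (dualizing an isomorphism of subobjects back to an isomorphism of quotients) and then establishes that $\Psi$ is an epimorphism \emph{of sheaves} by verifying the compatibility square between $\Psi$ and the chart isomorphism $\psi$ of (\ref{iso de abiertos}), i.e.\ $\pi_{j}{\bf x}^{\vee}=({\bf x}\lambda_{j})^{\vee}={\bf x}\lambda_{j}=\pi_{j}{\bf y}$, and using that $\coprod_{i}\overline{U_{i}}\to\overline{\Proj{\C}}$ is a covering epimorphism; mono plus epi in the (balanced) sheaf category then gives the isomorphism. Your route buys a stronger and more transparent statement (an isomorphism of presheaves, inverse exhibited explicitly) and makes the chart compatibility genuinely optional bookkeeping, whereas the paper's route avoids any double-duality coherence checking at the cost of invoking the covering and the sheaf-theoretic balance argument.

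One small correction: for naturality in $A$ you cannot invoke Lemma \ref{lema 4.6}, since it assumes $A\to B$ faithfully flat while naturality must hold for an \emph{arbitrary} morphism in $Comm(\C)$; your alternative justification is the right one and suffices, namely that the strong monoidal base-change functor $B\X{A}-$ preserves dualizable objects and duals, giving the coherent identification $B\X{A}L^{\vee}\cong(B\X{A}L)^{\vee}$. (For completeness you should also record, as the paper leaves implicit, that $L^{\vee}$ is again a \emph{line} object --- invertibility is clear and symtriviality of $L^{\vee}$ follows since its signature agrees with that of $L$ --- and that $B\X{A}{\bf x}^{\vee}$ remains a split monomorphism because split monomorphisms are preserved by any additive functor.)
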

\begin{proof}
Since the category of $\C$- schemes is a full subcategory of $Sh(Aff_{\C})$, we will prove the isomorphism as sheaves. Let us define for every $A\in Comm(\C)$ a function 
$$
\xymatrix@R=2ex{
\overline{\Proj{\C}}(A)\ar^{\Psi}[r] & {\Proj{\C}}(A)\\
A^{n+1}\overset{{\bf x}}{\longrightarrow} L\quad \ar@{|->}[r] &\quad L^{\vee}\overset{{\bf x^{\vee}}}{\longrightarrow} A^{n+1} .
}
$$
Since ${\bf x}$ is an epimorphism, ${\bf x}^{\vee}$ is a monomorphism. As $L$ is invertible, hence projective, there exists a section $s$ for ${\bf x}$, then $r=s^{\vee}$ is a retraction for ${\bf x}^{\vee}$ thence $(L^{\vee},{\bf x}^{\vee})$ is an element in $\Proj{\C}(A)$. let us see that $\Psi$ is injective. Take ${\bf x}_{i}:A^{n+1}\epi L_{i}$, $i=1,2$ two elements in $\overline{\Proj{\C}}(A)$ such that their images coincide, then we have that $L^{\vee}_{1}$ and $L^{\vee}_{2}$ are isomorphic as subobjects of $A^{n+1}$ as it's seen in the diagram
$$
\xymatrix{L_{1}^{\vee} \ar^{{\bf x}_{1}^{\vee}}[r]\ar_{\cong}[d] &A^{n+1}\\
L_{2}^{\vee} \ar_{{\bf x}_{2}^{\vee}}[ru] &
}
$$
by dualizing we obtain that $L_{1}$ and $L_{2}$ are isomorphic as quotients of $A^{n+1}$, therefore they represent the same element in $\overline{\Proj{\C}}(A)$.\\
To see that $\Psi$ is an epimorphism, we check that for every $i$, the following diagram commutes:
$$
\xymatrix@-0.3ex{
\overline{U_{i}}^{}\ar^{\psi}[r]\ar@{>->}[d]  & U_{i}^{}\ar@{>->}[d]\\
\overline{\Proj{\C}}\ar^{\Psi}[r] &  \Proj{\C}
}
$$
with $\psi$ defined in (\ref{iso de abiertos}). As before, for every $A\in Comm(\C)$, we identify the object $(L,\tilde{\bf x})$ in $\overline{U_{i}}(A)$ with $(A,{\bf x})$. Take $(A,{\bf x})$ in $\overline{U_{i}}(A)$, then $\Psi(A,{\bf x})=(A,{\bf x}^{\vee})$, since ${\bf x}\lambda_{i}$ is an isomorphism and for all $j=0,\dots n$, $\lambda_{j}^{\vee}=\pi_{j}$, then $\lambda_{i}^{\vee}{\bf x}^{\vee}=\pi_{i}{\bf x}^{\vee}$ is an isomorphism. This says that the pair $(A,{\bf x}^{\vee})$ is in $U_{i}(A)$.  On the other hand, $\psi(A,{\bf x})=(A,{\bf y})$ with ${\bf y}: A\to A^{n+1}$ satisfying that $\pi_{j}{\bf y}={\bf x}\lambda_{j}$. To prove the commutativity of the diagram, that is, the compatibility between $\Psi$ and $\psi$, it is enough to show that both pairs $(A,{\bf x}^{\vee})$ and $(A,{\bf y})$ are the same subobject in $A^{n+1}$. The result follows from the fact that the dual of the morphism ${\bf x}\lambda_{j}:A\to A$ is itself in $\mod_{\C}(A)$, therefore:
$$
\pi_{j}{\bf x}^{\vee}=\lambda_{j}^{\vee}{\bf x}^{\vee}=({\bf x}\lambda_{j})^{\vee}={\bf x}\lambda_{j}=\pi_{j}{\bf y}
$$
for every $j=0,\dots n$, then ${\bf x}^{\vee}={\bf y}$.\\
To finish the proof, consider the commutative diagram in $Sh(Aff_{\C})$
$$
\xymatrix@-0.3ex{
\coprod_{i}\overline{U_{i}}^{}\ar^{\psi}[r]\ar_{\cong}[r]\ar@{->>}[d]  & \coprod_{i} U_{i}^{}\ar@{->>}[d]\\
\overline{\Proj{\C}}\ar^{\Psi}[r] &  \Proj{\C}
}
$$
therefore $\Psi$ is a sheaf epimorphism.
\end{proof}

\begin{proposition}\label{intersection}
The fiber product $U_{ij}=U_{i}\times_{\Proj{\C}}U_{j}$ is representable by an affine scheme.
\end{proposition}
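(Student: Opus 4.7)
The plan is to identify $U_{ij}$ as a principal open subscheme of the affine scheme representing $U_i$, and then to invoke the fact that principal opens are themselves affine (via the localization construction). From the proof of Theorem \ref{teorema projectivo}, $U_i$ is represented by $\mathbb A^{n}_{\C}=Spec(\1[x_{1},\dots,x_{n}])$, and under the identification $L\cong A$ via $\pi_i\circ \mathbf{x}$, the universal element of $U_i(\1[x_{1},\dots,x_{n}])$ is the pair $(A,\tilde{\mathbf{x}})$ with $\pi_i\tilde{\mathbf{x}}=1$ and $\pi_k\tilde{\mathbf{x}}=x_{k}$ for each $k\neq i$.

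Next, I would specialise to this universal element the argument used in Theorem \ref{teorema projectivo} to show that $U_j$ is a Zariski open immersion of $\Proj{\C}$. Recall: given any morphism $h_A\to \Proj{\C}$ corresponding to $(L,\mathbf{x})\in \Proj{\C}(A)$, the pullback $U_j\times_{\Proj{\C}}h_A$ was identified with the complementary open sub-scheme $U_{I_j}$ of $Spec(A)$ associated to the image ideal $I_j$ of $\pi_j\circ\mathbf{x}$. Applying this with $A=\1[x_{1},\dots,x_{n}]$ and the morphism $h_A\to \Proj{\C}$ induced by the universal element of $U_i$, the composition $\pi_j\circ\tilde{\mathbf{x}}$ is precisely the coordinate $x_j\colon \1\to \1[x_{1},\dots,x_{n}]$ when $j\neq i$, so $I_j=\langle x_j\rangle$ is principal; when $j=i$ the composition is the identity and one simply has $U_{ii}=U_i$, which is already affine.

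To conclude, note that for a principal ideal $\langle f\rangle$ in $A$, the complementary open $U_{\langle f\rangle}$ is representable by $Spec(A_f)$: this follows by comparing the functor of points description of $U_{\langle f\rangle}$ with the universal property of localizations used in (\ref{U_{i}}) inside the proof of Lemma \ref{complementary open}. Consequently
$$
U_{ij}\cong Spec(\1[x_{1},\dots,x_{n}]_{x_j}),
$$
which is representable by an affine scheme.

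The only potentially delicate point is a bookkeeping one: one must carefully track the isomorphism that represents $U_i$ by $\mathbb A^{n}_{\C}$ to confirm that the universal morphism $\pi_j\circ\tilde{\mathbf{x}}$ really corresponds, under this identification, to the coordinate morphism $x_j\colon \1\to\1[x_{1},\dots,x_{n}]$. This is immediate from the explicit chain of isomorphisms displayed in the proof of Theorem \ref{teorema projectivo}, so no real difficulty arises.
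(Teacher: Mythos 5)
Your proof is correct, and it takes a genuinely different route from the paper's. The paper computes the functor of points of $U_{ij}$ directly in one stroke: given $(L,\mathbf x)\in U_{ij}(A)$, it trivializes $L$ via the isomorphism $x_{i}=\pi_{i}\mathbf x$, encodes the remaining components as a tuple of arrows $\frac{x_{k}}{x_{i}}\in Hom_{A}(A,A)$ among which $\frac{x_{j}}{x_{i}}$ is invertible, and then invokes the universal properties of the polynomial algebra and of localization to conclude $U_{ij}(A)\cong Hom_{Comm(\C)}(\1[\tfrac{x_{0}}{x_{i}},\dots,\tfrac{x_{n}}{x_{i}}][\tfrac{x_{j}}{x_{i}}]^{-1},A)$. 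You instead recycle machinery the paper has already established: the identification, inside the proof of Theorem \ref{teorema projectivo}, of the pullback of $U_{j}\to \Proj{\C}$ along a point $h_{A}\to\Proj{\C}$ with the complementary open $U_{I_{j}}$ of the image ideal of $\pi_{j}\circ\mathbf x$, specialized to the universal point of $U_{i}\cong \mathbb A^{n}_{\C}$, where that ideal becomes the principal ideal $\langle x_{j}\rangle$; combined with the identification (\ref{U_{i}}) from Lemma \ref{complementary open} of a principal complementary open with $Spec(A_{f})$. Both arguments land on the same coordinate ring --- a polynomial algebra on $n$ variables with one variable inverted --- so the conclusions agree; the paper's version is self-contained and exhibits the ring in the familiar fraction notation, while yours is more modular and yields the slightly stronger observation that the pullback of $U_{j}$ to any affine scheme is affine whenever the relevant image ideal is principal. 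Your flagged bookkeeping point is real but harmless: tracing the chain of isomorphisms $U_{i}(A)\cong\prod_{j\neq i}Hom_{A}(A,A)\cong Hom_{Comm(\C)}(\1[x_{1},\dots,x_{n}],A)$ shows the universal element has $\pi_{k}\tilde{\mathbf x}$ equal to multiplication by the corresponding coordinate, exactly as you claim.
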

\begin{proof}
For any  $A$ in $Comm({\C})$, an element in $U_{ij}(A)$ is an isomorphism class of pairs $(L,\bf x)$ where $\xymatrix@-1.2pc{L\ar@{^{(}->}[r]^{\bf x} & A^{n+1}}$ satisfies that $\pi_{i}{\bf { x}},\ \ \pi_{j}{ \bf{ x}}: L\to A$ are isomorphisms. We denote these isomorphisms by $x_{i},~ x_{j}$ respectively. Using these isomorphisms, we identify the pair $(L,\bf x)$ with a family of arrows 
$$
\left(\frac{x_{k}}{x_{i}}:A\to A\right)\quad\text{for}\quad k=0,\dots\hat i,\dots n,
$$
with the property that $\frac{x_{j}}{x_{i}}$ is an isomorphism (thence invertible). By the universal property of the localization and the polynomial algebra,  we have that 
$$U_{ij}(A)\cong Hom_{Comm(\C)}(\1[\tfrac{x_{0}}{x_{i}},\dots, \tfrac{x_{n}}{x_{i}}][\tfrac{x_{j}}{x_{i}}]^{\tiny {-1}},A)
$$
\end{proof}

%\begin{proposition}
%Let $A$ be a unital commutative ring and $\C=\mod_{\ent}(A)$, then $\Proj{\C}=\Proj{A}=\zp{n}\times_{\ent} A$???. Probarlo....  
%\end{proposition}
\subsection{The Octonionic Projective Space}
%\subsubsection*{The category of $\oct$-modules}
In \cite{quasialgebraO} the authors considered the symmetric monoidal category of real $G$-graded vector spaces $\mathcal U=(Vect^{G}_{\re},\X{G},\re,\Phi_{F},\sigma_{F})$ with $G=\ent_{2}\times\ent_{2}\times\ent_{2}$ and $F(x,y)=(-1)^{f(x,y)}$ with 
\begin{align*}
f(x,y)=&\sum_{i\leq j}x_{i}y_{j}+y_{1}x_{2}x_{3}+x_{1}y_{2}x_{3}+x_{1}y_{2}y_{3},\\
\phi_{F}(x,y,z)=&\partial F=\displaystyle\frac{F(x,y)F(xy,z)}{F(y,z)F(x,yz)},\\
\Phi_{F}((x\x y)\x z)=&\phi_{F}(x,y,z)x\x(y\x z)\quad\text{associativity constraint}\\
\sigma_{F}(x,y)=&\frac{F(x,y)}{F(y,x)}y\x x\qquad\qquad \text{symmetry}
\end{align*}
where by abuse of notation the degree of an homogeneous element is denoted by $|x|=x$. Then they proved that the Cayley algebra of the octonions $\oct$ can be obtained as the commutative algebra  $(\re\ent_{2}^{3},m_{F},\eta)$ in $\mathcal U$, with multiplication and unity given by $m_{F}(x,y)=F(x,y)x\cdot_{G} y$, $~\eta(1)=1_{G}$. Once we have a commutative algebra in a symmetric monoidal category, we can construct its category of modules and make some other constructions similar to those, one has in commutative algebra with the purpose to imitate the algebraic geometry over commutative rings.\\
In this section we will work on the properties of the category $\mod_{\mathcal U}(\oct)$, concerning to projective and free objects (respect to a left adjoint functor called the free functor). We will prove that in fact $\oct$ is a projective, finitely presented generator for the category $\mod_{\mathcal U}(\oct)$, this will say by using Gabriel's Theorem, that $\mod_{\mathcal U}(\oct)$ is in fact equivalent to a category of modules over a certain ring. A proof of this result can be seen in \cite{panaite2004quasi}. Although we are not interested in using this equivalence, it is worth to mention it.\\

Let us start by characterizing the objects in $\mod_{\mathcal U}(\oct)$. They consist of a pair $(X,\rho)$ with $X$ a $\ent_{2}^{3}$-graded real vector space with a graded morphism $\rho:\oct\X{\mathcal U} X\to X$ satisfying the pentagon and triangle axioms for the action. Since $\rho$ is a degree preserving morphism, then to give $\rho$ is to give an $8$-tuple of real vector space morphisms $\rho_{i}:(\oct\X{\mathcal U} X)_{i}\to X_{i}$, where the index denotes the $i$-th degree component. \\
If we denote $\{e_{i},\ \ i=0\dots 7\}$ a basis for $\oct$, then the associativity of the action says that $\rho(e_{i},\rho(e_{i},x_{k}))=-x_{k}$, this means that for every $i=0,\dots 7$, the multiplication by $e_{i}$ induces an isomorphism $X_{k}\cong X_{l}$ with $k, l$ such that $e_{l}=m_{F}(e_{i},e_{k})$.\\ In summary, an $\oct$-module is just a graded vector space with distinguished isomorphisms between the homogenous components, given by the multiplication of the basis elements of $\oct$. Thus, the data of being an $\oct$-module is in the $0$-th degree component and one obtains the rest of the components by multiplication of the $e_{i}'s$.\\
Next, a morphism between objects in $\mod_{\mathcal U}(\oct)$ will be a preserving degree morphism between the graded vector spaces compatible with the actions of $\oct$, i.e., a morphism between the degree zero components commuting with the respective isomorphisms. More explicitely, if $X,Y$ are objects in $\mod_{\mathcal U}(\oct)$, then $f:X\to Y$ is characterized by the morphism $f_{0}:X_{0}\to Y_{0}$, since the rest of the morphisms are just conjugations of $f_{0}$ by the $e_{i}$'s as is depicted in the following commutative diagram:
\begin{equation}\label{O-morphisms}
\xymatrix{
X_{0}\ar^{f_{0}}[r]\ar_{\cdot e_{i}}[d]  & Y_{0}\ar^{\cdot e_{i}}[d]\\
X_{i}\ar_{f_{i}}[r]  	&   Y_{i}.
}
\end{equation}

  All this implies the following propositions:

\begin{proposition}\label{V0 conservative}
Let $V_{0}=Hom_{\oct}(\oct,-):\mod_{\mathcal U}(\oct)\to \mathcal E ns$ be the ''canonical'' forgetful functor in monoidal categories. Then $V_{0}$ is a conservative functor.
\end{proposition}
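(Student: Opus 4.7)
The plan is to unpack what $V_0$ does on both objects and morphisms, using the structural description of $\mod_{\mathcal U}(\oct)$ given just above the proposition, and then to observe that conservativity reduces to the (trivial) fact that conjugation by an isomorphism preserves bijectivity.

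First I would identify $V_0(X)$ explicitly. Since $\oct$ is the unit object of $\mod_{\mathcal U}(\oct)$, a morphism $g:\oct\to X$ in $\mod_{\mathcal U}(\oct)$ is determined by $g(1_{\oct})$, and $1_{\oct}$ lives in degree $1_G=(0,0,0)$. Hence there is a natural bijection
\[
V_0(X)=\mathrm{Hom}_{\oct}(\oct,X)\;\cong\; X_0,
\]
and under this identification $V_0(f)$ is exactly the degree-zero component $f_0:X_0\to Y_0$ of the morphism $f:X\to Y$.

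Next, suppose $f:X\to Y$ is a morphism in $\mod_{\mathcal U}(\oct)$ such that $V_0(f)=f_0$ is a bijection of sets. As a map of real vector spaces $f_0$ is then an isomorphism (being a set-theoretic bijection that is $\re$-linear). By the discussion preceding the proposition, the remaining components $f_i:X_i\to Y_i$ for $i=1,\dots,7$ are determined by the commutative squares in diagram~(\ref{O-morphisms}); explicitly,
\[
f_i=(\cdot e_i)\circ f_0\circ(\cdot e_i)^{-1},
\]
where $\cdot e_i:X_0\to X_i$ and $\cdot e_i:Y_0\to Y_i$ are the isomorphisms induced by the action of the basis vectors $e_i$ of $\oct$ (these are isomorphisms because $\rho(e_i,\rho(e_i,-))=-\mathrm{id}$, as noted). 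Thus each $f_i$ is an isomorphism of vector spaces, so $f$ is a degree-preserving bijection compatible with the $\oct$-action, i.e.\ an isomorphism in $\mod_{\mathcal U}(\oct)$.

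The only real content is the identification $V_0(X)\cong X_0$ together with the observation that an $\oct$-module morphism is rigidly determined by its degree-zero part through conjugation by the (invertible) $e_i$-multiplications; there is no genuine obstacle, and the argument is essentially a direct application of the description of objects and morphisms given in the paragraphs just above the statement.
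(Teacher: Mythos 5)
Your proof is correct and follows essentially the same route as the paper: identify $V_0(X)\cong X_0$ (the paper does this via the free--forgetful adjunction $\mathcal U\rightleftarrows \mod_{\mathcal U}(\oct)$, you do it by evaluating at $1_{\oct}$, which amounts to the same isomorphism) and then recover each $f_i$ from $f_0$ by conjugation with the invertible $e_i$-multiplications in diagram~(\ref{O-morphisms}). No gaps; the only difference is cosmetic.
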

%\begin{obs}
%The notation for this forgetful functor comes from the enriched category setting, we use it here although we are not constructing the category of modules as an enriched category over $\U$. In the Appendix, however we work in the enriched context.
%\end{obs}
\begin{proof}
Let $f:(X,\rho)\to (Y,\rho')$ be a morphism in $\mod_{\mathcal U}(\oct)$, such that the induced morphism $V_{0}(f):Hom_{\oct}(\oct,X)\to Hom_{\oct}(\oct,Y)$ is an isomorphism. If we denote by $|\text{-}|$ the forgetful functor, the adjunction 
$$
\xymatrix{\mod_{\mathcal U}(\oct)\ar@<-1ex>_{\quad\ \ |-| }^{\quad\ \ \perp}[r]& \U\ar@<-1ex>_{\ \ \ \ \ \ -\tens{\U}\oct}[l]}
$$
says that we have an isomorphism $Hom_{\U}(\re,|X|)\overset{\sim}{\to}Hom_{\U}(\re,|Y|)$. Since in the category $\mathcal U$ we have the isomorphisms $X_{0}\cong Hom_{\U}(\re,|X|)$, $Y_{0}\cong Hom_{\U}(\re,|Y|)$, hence we have the isomorphism $f_{0}:X_{0}\to Y_{0}$.  Finally, by the diagram \ref{O-morphisms}, we have that $f:(X,\rho)\to (Y,\rho')$ is in fact an isomorphism.
\end{proof}

\begin{proposition}\label{O is projective}
$\oct$ is a projective finitely presented generator in $\mod_{\mathcal U}(\oct)$.
\end{proposition}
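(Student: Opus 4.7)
The plan is to leverage the explicit structural description of $\mod_{\mathcal{U}}(\oct)$ given in the paragraphs preceding the statement. An $\oct$-module is determined by its degree-zero component together with the isomorphisms $X_0 \xrightarrow{\cdot e_i} X_i$ induced by multiplication by the basis elements $e_i$, and a morphism $f : X \to Y$ is in turn determined by its degree-zero component $f_0 : X_0 \to Y_0$ via the compatibility diagram (\ref{O-morphisms}). Under the canonical identification $V_0(X) = \mathrm{Hom}_{\oct}(\oct, X) \cong X_0$ already exploited in Proposition \ref{V0 conservative}, the three properties I need become elementary assertions about the degree-zero component functor $(-)_0$ valued in $\re$-Vect.

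For projectivity of $\oct$, I need to show that $V_0$ preserves epimorphisms. An epimorphism $f : X \to Y$ in $\mod_{\mathcal{U}}(\oct)$ has vanishing cokernel, which, computed componentwise in $\mathcal{U}$, forces each $f_i$ to be surjective; but diagram (\ref{O-morphisms}) presents $f_i$ as a conjugate of $f_0$ by the isomorphism given by multiplication by $e_i$, so $f$ is an epimorphism if and only if $f_0$ is surjective. Hence $V_0(f) = f_0$ is a surjection of sets. For finite presentability, I need $V_0$ to preserve filtered colimits. Filtered colimits in $\mathcal{U}$ are computed degreewise in $\re$-Vect, and since filtered colimits in $\re$-Vect preserve isomorphisms, the structure maps $X_0^{\alpha} \to X_i^{\alpha}$ pass to compatible isomorphisms on the colimit, yielding a well-defined $\oct$-module structure on $\mathrm{colim}\, X^{\alpha}$ whose degree-zero part is $\mathrm{colim}\, X_0^{\alpha}$. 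Thus $V_0(\mathrm{colim}\, X^{\alpha}) \cong \mathrm{colim}\, V_0(X^{\alpha})$.

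Finally, the generator property: Proposition \ref{V0 conservative} already shows $V_0$ is conservative, and, combined with the left exactness of $V_0$ as a right adjoint and the preservation of epimorphisms just proved, this forces $V_0$ to be faithful, which in the abelian category $\mod_{\mathcal{U}}(\oct)$ is equivalent to $\oct$ being a generator. Concretely, if $X \neq 0$ then some component $X_i$ is nonzero, hence so is $X_0$ via the isomorphism induced by $e_i$, so $V_0(X) \neq 0$ and a nonzero morphism $\oct \to X$ exists. I do not anticipate a serious obstacle: once the preliminary reduction to the degree-zero component is granted (which is the content of the paragraphs immediately preceding the proposition), each claim amounts to a transparent transfer of a standard property from $\re$-Vect to $\mod_{\mathcal{U}}(\oct)$.
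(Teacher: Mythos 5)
Your proof is correct, and it shares the paper's overall skeleton: all three properties are routed through the functor $V_{0}=\mathrm{Hom}_{\oct}(\oct,-)$ and its identification with the degree-zero component. Where you differ is in how each property is verified. The paper stays at the level of the adjunction $\mathrm{Hom}_{\oct}(\oct,-)\cong \mathrm{Hom}_{\mathcal U}(\re,|-|)$: projectivity follows because $\re$ is projective in $\mathcal U$ and $|-|$ preserves colimits (so the free module $\oct\cong\re\X{\mathcal U}\oct$ is projective), finite presentability because $\mathrm{Hom}_{\mathcal U}(\re,-)$ preserves filtered colimits, and the generator property from the abstract lemma that a conservative functor preserving equalizers is faithful, with conservativity supplied by Proposition \ref{V0 conservative}. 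You instead unwind everything through the explicit structural description of $\oct$-modules: epimorphisms are detected on $f_{0}$ by conjugating with the isomorphisms given by multiplication by the $e_{i}$, filtered colimits are computed degreewise and the distinguished isomorphisms pass to the colimit, and nonzero objects have nonzero degree-zero part. Both routes are sound; the paper's is shorter and generalizes verbatim to any base category whose unit is a projective finitely presentable generator (exactly the strong relative context hypothesis), while yours makes transparent why the statements hold in this particular category. Two small points of care in your write-up: the criterion ``every nonzero object receives a nonzero map from $\oct$'' is equivalent to faithfulness of $\mathrm{Hom}_{\oct}(\oct,-)$ only in the presence of projectivity (a nonzero map $\oct\to \mathrm{Im}\,f$ must be lifted along $X\epi \mathrm{Im}\,f$), so the order of your argument matters --- you do establish projectivity first, so this is fine; and the deduction ``conservative $+$ left exact $+$ preserves epimorphisms $\Rightarrow$ faithful'' should be read for the additive functor $(-)_{0}$ valued in $\re$-vector spaces (an exact conservative additive functor between abelian categories is faithful, via the image factorization), not literally for the $\mathcal{E}ns$-valued $V_{0}$; the paper's equalizer-based formulation sidesteps this nuance.
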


\begin{proof}
Limits and colimits in $\mod_{\mathcal U}(\oct)$ are computed in $\mathcal U$, this means in particular that $|\text{-}|:\mod_{\mathcal U}(\oct)\to \mathcal U$ preserves them. Now, since $\re$ is a projective object in $\mathcal U$, then $\oct\cong \re\X{\U}\oct$ is projective in $\mod_{\mathcal U}(\oct)$.\\
Now, to show that $\oct$ is finitely presented, observe thar $Hom_{\mathcal U}(\re,-)$ preserves them, hence by the isomorphism 
$$
Hom_{\oct}(\oct,-)\cong Hom_{\mathcal U}(\re,|-|)
$$
we get that $Hom_{\oct}(\oct,-)$ preserves filtered colimits, that is $\oct$ is finitely presented.\\
Finally, to see that $\oct$ is a generator, we have to prove that $Hom_{\oct}(\oct,-)$ is faithful. The result follows by ``abstract nonsense'': In any category $\C$ with equalizers , a conservative functor $F:\C\to \mathcal Ens$ preserving them is faithful.
\end{proof}

From all the previous construction we obtain that $\mod_{\mathcal U}(\oct)$ is an abelian strong relative context, then we have the following result

\begin{cor}\label{projectivo octonionico}
We define the octonionic projective space $\Proj{\oct}$ to be the functor $\Proj{\C}$ relative to the category $\C=\mod_{\mathcal U}(\oct)$. $\Proj{\oct}$ is a relative scheme.
\end{cor}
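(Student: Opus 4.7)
The plan is to observe that this corollary is essentially a direct application of Theorem \ref{teorema projectivo} to the specific category $\C=\mod_{\mathcal U}(\oct)$, so the entire task reduces to verifying that $\mod_{\mathcal U}(\oct)$ satisfies the hypotheses of an abelian strong relative context. First I would recall the definition stated in the introduction: we need $\mod_{\mathcal U}(\oct)$ to be abelian, bicomplete, symmetric closed monoidal, and to have $\oct$ (which is the unit of the monoidal product $\X{\oct}$) as a projective, finitely presentable generator.

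Next I would assemble the structural ingredients. The ambient category $\mathcal U=(Vect^{G}_{\re},\X{G},\re,\Phi_{F},\sigma_{F})$ is abelian, bicomplete, and closed symmetric monoidal by construction, and these properties are transferred to $\mod_{\mathcal U}(\oct)$ via the standard adjunction $\mathcal U \rightleftarrows \mod_{\mathcal U}(\oct)$ between the forgetful functor and the free functor $-\X{\U}\oct$. In particular, limits and colimits in $\mod_{\mathcal U}(\oct)$ are computed in $\mathcal U$, and the monoidal structure $\X{\oct}$ together with the internal hom is inherited in the usual way for modules over a commutative algebra in a symmetric monoidal category.

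Then I would cite the two propositions proved immediately above the corollary: Proposition \ref{V0 conservative} shows that the canonical forgetful functor $V_{0}=Hom_{\oct}(\oct,-)$ is conservative, while Proposition \ref{O is projective} shows that $\oct$ is a projective, finitely presentable generator in $\mod_{\mathcal U}(\oct)$. Together with the observation that $V_{0}$ preserves and reflects epimorphisms and filtered colimits (which follows once one knows $\oct$ is a projective finitely presentable generator), this is exactly the list of conditions defining an abelian strong relative context.

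With $\mod_{\mathcal U}(\oct)$ identified as an abelian strong relative context, Theorem \ref{teorema projectivo} applies verbatim and yields that $\Proj{\mod_{\mathcal U}(\oct)}=\Proj{\oct}$ is a $\C$-scheme. I do not expect any real obstacle: the work was front-loaded into Propositions \ref{V0 conservative} and \ref{O is projective} and into Theorem \ref{teorema projectivo}, and the corollary is merely the assembly step. The only mild subtlety worth flagging explicitly in the write-up is that although $\oct$ is non-associative as an $\re$-algebra, it is a genuine commutative algebra object in the symmetric monoidal category $\mathcal U$ by the construction of Albuquerque--Majid, so the framework of relative algebraic geometry developed in the earlier sections does apply to it without modification.
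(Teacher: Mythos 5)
Your proposal is correct and follows essentially the same route as the paper: the paper likewise treats the corollary as an immediate consequence of Propositions \ref{V0 conservative} and \ref{O is projective} (which establish that $\mod_{\mathcal U}(\oct)$ is an abelian strong relative context) combined with Theorem \ref{teorema projectivo}. Your additional remark that the non-associativity of $\oct$ over $\re$ is harmless because $\oct$ is a commutative algebra object in $\mathcal U$ matches the paper's own framing of the Albuquerque--Majid construction.
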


%\bibliographystyle{unsrt}
%\bibliography{paperRelativeProjective}

\end{document}